\journal{Journal of \LaTeX\ Templates}
\newtheorem{theorem}{Theorem}
\newtheorem{lemma}{Lemma}
\newtheorem{remark}{Remark}
\newcommand{\m}{\mathbb{M}}
\newcommand{\tpm}{T_{p}\mathbb{M}}
\DeclareMathOperator{\grad}{grad}
\begin{document}

\begin{frontmatter}

\title{Damped Newton's Method on Riemannian Manifolds}
%\tnotetext[mytitlenote]{Fully documented templates are available in the elsarticle package on \href{http://www.ctan.org/tex-archive/macros/latex/contrib/elsarticle}{CTAN}.}

%% Group authors per affiliation:
\author[bort-teles]{Marcio Ant\^onio de A. Bortoloti}
\ead{mbortoloti@uedb.edu.br}
\author[bort-teles]{Teles A. Fernandes\corref{mycorrespondingauthor}}
\ead{telesfernades@uesb.edu.br}
\author{Orizon Pereira Ferreira\fnref{orizonaddress}}
\ead{orizon@ufg.br}
\author{Yuan Jin Yun\fnref{yuanaddress}}
\ead{jin@ufpr.br}

%\address{Radarweg 29, Amsterdam}
%\fntext[myfootnote]{Since 1880.}

%% or include affiliations in footnotes:
%\author[mymainaddress,mysecondaryaddress]{Elsevier Inc}
%\ead[url]{www.elsevier.com}

%\author[mysecondaryaddress]{Global Customer Service\corref{mycorrespondingauthor}}

\cortext[mycorrespondingauthor]{Corresponding author}
\ead{support@elsevier.com}

\address[bort-teles]{DCET/UESB, CP-95, CEP 45083-900-Vit\'oria da Conquista, Bahia, Brazil }
\address[orizonaddress]{IME/UFG, CP-131, CEP 74001-970 - Goaiania, Goi\'as, Brazil}
\address[yuanaddress]{DM/CP/UFPR, Jardin das Am\'ericas, CEP 81531-980 - Curitiba, Paran\'a, Brazil}
\begin{abstract}
A {\it  damped Newton's method}  to find a singularity of a vector field in Riemannian  setting is presented  with global convergence study.  It is  ensured that the sequence generated by the proposed method reduces to a sequence generated by the Riemannian version of the classical Newton's method after a finite number of iterations,  consequently its  convergence rate  is superlinear/quadratic. Moreover, numerical experiments illustrate that the damped Newton's method  has  better performance  than the Newton's method in number of iteration and computational time. 
\end{abstract}

\begin{keyword}
Global Optimization, Damped Newton method, Superlinear/Quadratic Rate, Riemannian Manifold

{\bf  AMS subject classifications:}  	90C30,  49M15, 65K05. 
\end{keyword}

\end{frontmatter}

%\linenumbers

\section{Introduction}
In the 1990s, the optimization on manifolds area  gained considerable popularity, especially with the work of Edelman et al. \cite{EdelmanAriasSmith1999}. Recent years have witnessed a growing interest in the development of numerical algorithms for nonlinear manifolds, as there are many numerical problems posed in manifolds arising in various natural contexts, see  \cite{EdelmanAriasSmith1999, Smith1994, Absil2009}.  Thus, algorithms using the differential structure of nonlinear manifolds play an important role in optimization; see \cite{Absil2014,HuangAndGallivanAndAbsil2015,LiLopezMartin-Marquez2009,LiWang2008,HuLiYang2016,Ring2012,Manton2015,Hosseini2017,GrohsandHosseini2016,ZhaoBaiJin2015,CambierAbsil2016,SatoIwai2013}. In this paper, instead of focusing on finding singularities of gradient vector fields, including local minimizers, on Riemannian manifolds, we consider the more general problem of finding singularities of vector fields. 

Among the methods to find a zero of a nonlinear function, Newton's method under suitable conditions has local quadratic/superlinear convergence rate; see \cite{Dennis1996,Ortega2000}. This remarkable property has motivated several studies on generalizing Newton's method from a linear setting to the Riemannian one \cite{Absil2009,Li2009,Schulz2014,Wang2009,Ferreira2012,Ferreira2002,Li2006,FernandesAndFerreiraAndYuan2017}. Although of Newton's method shows fast local convergence, it is very sensitive with respect to the initial iterate and may diverge if it is not sufficiently close to the solution. To overcome this drawback, some strategies used in Newton's method for optimization problems  were  introduced including BFGS, Levenberg-Marquardt and trust region etc, see \cite{Dennis1996}  and  \cite{Bertsekas2014}.  On the other hand, it is well-known in the linear context that one way to globalize the convergence of Newton's method is to damp its step-size (see \cite{Dennis1996,Solodov2014,Bertsekas2014,Burdakov1980}).  Among the strategies used,  one particularly interesting is  the one by using  a linear search together with  a merit function.  In this case, the basic idea  is to use linear search to damp Newton step-size  when the full step does not provide a sufficient decrease for values of the chosen merit function, which measures the quality of approximation to a zero of the nonlinear function in consideration. Newton's method with these  globalization strategies are called {\it Damped Newton's Methods}. For a comprehensive study about these  subject on linear setting  see, for example, \cite{Ortega2000,Dennis1996,Deuflhard2011}.  In this paper, we generalize this  strategy of  globalization of Newton's method to solve the problem of finding singularities of vector fields defined on Riemannian manifolds. Until now, studies on the globalization strategies in Riemannian settings have been restricted to optimization problems, for example,  the Newton's method with the  hessian of the objective function  updated by \textit{BFGS} family, \cite{HuangAndGallivanAndAbsil2015, Ring2012}, the \textit{Trust-Region} methods, \cite{AbsilAndBakerAndGallivan2007} and \textit{Levenberg-Marquardt} methods \cite[Chapter~8, Section~8.4.2]{Absil2009}.  To the best of our knowledge, a global analysis of the damped Newton's method for finding singularities of vector fields defined on Riemannian manifolds by using  a linear search together with  a merit function, whose particular case is to find local minimizers of real-valued functions defined on Riemannian manifolds,  is novel here. Based on the idea presented in \cite[Section 4]{FacchineiAndKanzow1997} for nonlinear complementarity problem, we propose a damped Newton method in the Riemannian setting. Moreover, we shall show its global convergence  to a singularity of the vector field preserving the same convergence rate of the classical Newton's method. We perform some numerical experiments  for minimizing  families of functions defined on cone of symmetric positive definite matrices which is one of Riemannian manifolds.  Our experiments illustrate numerical performance of the proposed damped Newton method by  linear search   decreasing  a merit function. The numerical results display that the damped Newton improves the behavior of the method when compared to the full step Newton method. 

The remainder this paper is organized as follows. In Section~\ref{sec:basic} we present the notations  and basic results used in the rest paper. In Section~\ref{NewtonGlobal} we describe the global superlinear and quadratic convergence analysis of the damped Newton method. In Section~\ref{sec:NE} we display numerical experiments to verify the main theoretical results. Finally,   we concludes the paper Section~\ref{sec:conclusions}.

%%%%%%%%%%%%%%%%%%%%%%%%%%%
\section{Basic Concepts  and Auxiliary Results}\label{sec:basic}
%%%%%%%%%%%%%%%%%%%%%%%%%%%
In this section we recall some notations, definitions and auxiliary results of Riemannian geometry  used throughout the paper.  Some basic concepts used here can be found in many introductory books on Riemannian geometry, for example, in \cite{doCarmo1992} and \cite{Sakai1996}.  Let $\mathbb{M}$ be a finite dimensional Riemannian manifold, denote the {\it tangent space} of $\m$ at $p$ by $\tpm$ and the {\it tangent bundle} of $\m$ by $T\m=\bigcup_{p\in\m}\tpm$.  The corresponding norm associated to the Riemannian metric $\langle \cdot ~, ~ \cdot \rangle$ is denoted by $\|  ~\cdot~ \|$. The Riemannian  distance  between $p$ and $q$   in $\mathbb{M}$ is denoted  by $d(p,q)$,  which induces the original topology on $\mathbb{M}$, namely,  $(\mathbb{M}, d)$ that is a complete metric space.   The open ball of radius $r>0$ centred at $p$ is defined as  $B_{r}(p):=\left\lbrace q\in\m:d(p,q)<r\right\rbrace$.  Let  $\Omega \subseteq   \m$ be an open set and denote by ${\cal X}(\Omega)$ the {\it space of  differentiable  vector fields} on $\Omega$.  Let $\nabla$ be the Levi-Civita connection associated to $(\mathbb{M}, \langle \cdot ~, ~ \cdot \rangle)$.  The covariant derivative of $X \in {\cal X}(\Omega)$ determined by $\nabla$ defines at each $p\in \Omega$ a linear map $\nabla X(p):\tpm\to\tpm$ given by $\nabla X(p)v:=\nabla_{Y}X(p),$
where $Y$ is a vector field such that $Y(p)=v$. For $f: \mathbb{M} \to \mathbb{R}$,  a twice-differentiable function the Riemannian metric induces the mappings   $f\mapsto  \mbox{grad} f $ and   $f\mapsto \mbox{Hess} f$, which associate its {\it gradient} and {\it hessian} by the rules 
\begin{equation} \label{eq:GradHess}
\langle \mbox{grad} f,X\rangle:=d f(X),  \qquad    \langle \mbox{Hess} f X,X\rangle :=d^2 f(X, X),  \qquad \forall~ X \in {\cal X}(\Omega), 
\end{equation}
respectively, and  the last equalities imply that  $\mbox{Hess} f X= \nabla_{X}  \mbox{grad} f$,   for all  $X \in {\cal X}(\Omega)$. 
For  each $p\in \Omega$, the { \it conjugate}  of a linear map $A_p:T_p \mathbb{M} \to T_p \mathbb{M}$  is a linear  map  $A^{*}_p:T_p \mathbb{M} \to T_p \mathbb{M}$ defined by $\langle A_p v, u\rangle= \langle v,  A^{*}_pu\rangle$, for all $u, v\in T_{p}M$. The {\it norm }  of the linear map $A_p$  is defined by $\|A_p\|:=\sup \left\{ \|A_pv \|~:~ v\in T_p \mathbb{M}, \,\|v\|=1 \right\}$.  A vector field $V$ along a differentiable curve $\gamma$ in $M$ is said to be {\it parallel} if and only if $\nabla_{\gamma^{\prime}} V=0$. If $\gamma^{\prime}$ itself is parallel we say that $\gamma$ is a {\it geodesic}.  The restriction of a geodesic to a  closed bounded interval is called a {\it geodesic segment}. A geodesic segment joining $p$ to $q$ in $\mathbb{M}$ is said to be {\it minimal} if its length is equal to $d(p,q)$. If there exists a  unique geodesic segment  joining $p$ to $q$, then we denote it  by $\gamma_{pq}$.  For each $t \in [a,b]$, $\nabla$ induces an isometric mapping, relative to $ \langle \cdot , \cdot \rangle  $, $P_{\gamma,a,t} \colon T _{\gamma(a)} {\mathbb{M}} \to T _{\gamma(t)} {\mathbb{M}}$ defined by $ P_{\gamma,a,t}\, v = V(t)$, where $V$ is the unique vector field on $\gamma$ such that $\nabla_{\gamma'(t)}V(t) = 0$, and $V(a)=v$. This mapping is the so-called {\it parallel transport} along  the geodesic segment   $\gamma$ joining  $\gamma(a)$ to $\gamma(t)$. Note also that $ P_{\gamma,\,b_{1},\,b_{2}}\circ P_{\gamma,\,a,\,b_{1}}=P_{\gamma,\,a,\,b_{2}}$ and  $P_{\gamma,\,b,\,a}=P^{-1}_{\gamma,\,a,\,b}$.  {\it When there is no confusion we will  consider the notation $P_{pq}$  instead of $P_{\gamma,\,a,\,b}$ in the case when $\gamma$ is the unique geodesic segment joining $p$ and $q$}. A Riemannian manifold is {\it complete} if the geodesics are defined for any values of $t\in \mathbb{R}$. Hopf-Rinow's theorem asserts that every pair of points in a  complete Riemannian manifold $\mathbb{M}$ can be joined by a (not necessarily unique) minimal geodesic segment.  Due to  the completeness of the Riemannian manifold $\mathbb{M}$, the {\it exponential map} $\exp_{p}:T_{p}\mathbb{M} \to \mathbb{M} $ can be  given by $\exp_{p}v\,=\, \gamma(1)$, for each $p\in \mathbb{M}$, where $\gamma$ is the geodesic defined by its position $p$ and velocity $v$ at $p$. Let $p\in \mathbb{M}$, the {\it injectivity radius} of $\mathbb{M}$ at $p$ is defined by
$$
i_{p}:=\sup\left\{ r>0~:~{\exp_{p}}_{\lvert_{B_{r}(0_{p})}} \mbox{ is\, a\, diffeomorphism} \right\},
$$
where $B_r(0_{p}):=\left\lbrace  v\in T_{p}\mathbb{M}:\parallel v-0_{p}\parallel <r\right\rbrace$ and $0_{p}$ denotes the origin of the tangent plane  $T_{p}\mathbb{M}$. 
\begin{remark}\label{unicidadedageodesica}
	Let $\bar{p}\in \mathbb{M}$. The above definition implies  that if $0<\delta<i_{\bar{p}}$, then $\exp_{\bar p}B_{\delta}(0_{ \bar p})=B_{\delta}( \bar p)$. Moreover, for all $p\in B_{\delta}(\bar p)$,  there exists a unique geodesic segment $\gamma$ joining  $p$ to $\bar p$, which is given by $\gamma_{p\bar p}(t)=\exp_{p}(t \exp^{-1}_{p} {\bar p})$, for all $t\in [0, 1]$.
\end{remark}

Consider  $p\in\mathbb{M}$ and $\delta_{p}:=\min\{1, i_{p}\}$. The quantity assigned to measure how fast the geodesic spread apart in $\m$ has been defined in \cite{Dedieu2003} as 
\begin{multline}\nonumber
K_{p}:=\sup\bigg{\{}  \dfrac{d(\exp_{q}u, \exp_{q}v)}{\parallel u-v\parallel} ~:~ q\in B_{\delta_{p}}(p), ~ u,\,v\in T_{q}\mathbb{M}, ~u\neq v,\\
 ~\parallel v\parallel\leq \delta_{p},
\parallel u-v\parallel\leq \delta_{p}\bigg{\}}.
\end{multline}
\begin{remark} In particular, when $u=0$ or more generally when $u$ and $v$ are on the same line through $0$, $d(\exp_{q}u, \exp_{q}v)=\parallel u-v\parallel$. Hence,  $K_{p}\geq1$,  for all $p\in\m$. Moreover, when $\m$ has non-negative sectional curvature, the geodesics spread apart less than the rays \cite[chapter 5]{doCarmo1992}, i.e., $d(\exp_{p}u, \exp_{p}v)\leq\| u-v\|$ and, in this case, $K_{p}=1$  for all $p\in\m$.
\end{remark}
Let  $X\in {\cal X}(\Omega)$   and $  \bar{p}\in \Omega$. Assume that  $0<\delta<\delta_{\bar{p}}$. Since  $\exp_{\bar{p}}B_{\delta}(0_{\bar{p}})=B_{\delta}(\bar{p})$, there exists a unique geodesic joining  each $p\in B_{\delta}(\bar{p})$ to $\bar{p}$. Moreover, using \cite[ equality 2.3]{Ferreira2002} we obtain
\begin{equation}\label{diferenciabilidade}
X(p)= P_{\bar{p}p}X(\bar{p})+P_{\bar{p}p}\nabla X(\bar{p})\exp^{-1}_{\bar{p}}p+ d(p, \bar{p})r(p),  \qquad \underset{p\to \bar{p}}{\lim}r(p)=0.
\end{equation}
Let $p_{*}\in\mathbb{M}$. The following result establish that, if $\nabla X(p_{*})$ is nonsingular  there exists a neighborhood of $p_{*}$ which $\nabla X$ is nonsingular; see \cite[Lemma 3.2]{FernandesAndFerreiraAndYuan2017}.
\begin{lemma}\label{Le:NonSing}
	Let  $X\in {\cal X}(\Omega)$  and ${\bar p}\in\mathbb{M}$. Assume that $\nabla X$ is  continuous at ${\bar p}$, and  $\nabla X({\bar p})$ is  nonsingular. Then, there exists $0<\hat{\delta}<\delta_{{\bar p}}$ such that $B_{\hat{\delta}}({\bar p})\subset \Omega$, and $\nabla X(p)$ is nonsingular  for each $p\in B_{\hat{\delta}}({\bar p})$.
\end{lemma}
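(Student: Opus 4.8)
The plan is to reduce the statement to the classical Banach perturbation lemma, namely the fact that in a finite-dimensional inner-product space an invertible linear map remains invertible under any perturbation whose operator norm is smaller than the reciprocal of the norm of its inverse. The only genuinely Riemannian difficulty is that $\nabla X(p)$ and $\nabla X(\bar p)$ act on different tangent spaces, $\tpm$ and $T_{\bar p}\m$ respectively, so I first transport the former back to the fixed space $T_{\bar p}\m$ by means of the parallel transport, which is a linear isometry and hence both invertible and norm-preserving.

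First, since $\Omega$ is open, $\bar p\in\Omega$, and $\delta_{\bar p}=\min\{1,i_{\bar p}\}$, I would fix a radius $0<\delta_1<\delta_{\bar p}$ small enough that $B_{\delta_1}(\bar p)\subset\Omega$. By Remark~\ref{unicidadedageodesica}, for every $p\in B_{\delta_1}(\bar p)$ there is a unique geodesic segment joining $p$ to $\bar p$, so the parallel transports $P_{\bar p p}$ and $P_{p\bar p}=P_{\bar p p}^{-1}$ are well defined. I then introduce the operator $\tilde A(p):=P_{p\bar p}\,\nabla X(p)\,P_{\bar p p}\colon T_{\bar p}\m\to T_{\bar p}\m$, which now lives on the same space as $A:=\nabla X(\bar p)$. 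Since $A$ is nonsingular by hypothesis, $c:=\|A^{-1}\|$ is finite and positive, and invoking the continuity of $\nabla X$ at $\bar p$---expressed precisely as $\lim_{p\to\bar p}\|\tilde A(p)-A\|=0$---I can choose $\hat\delta\in(0,\delta_1)$ so that $\|\tilde A(p)-A\|<1/c$ for all $p\in B_{\hat\delta}(\bar p)$.

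With this choice I would write $\tilde A(p)=A\bigl(I-A^{-1}(A-\tilde A(p))\bigr)$ and note that $\|A^{-1}(A-\tilde A(p))\|\le c\,\|A-\tilde A(p)\|<1$; the Neumann series then shows that $I-A^{-1}(A-\tilde A(p))$ is invertible, whence $\tilde A(p)$ is invertible as a composition of invertible maps. Finally, because parallel transport is an isometry, $\nabla X(p)=P_{\bar p p}\,\tilde A(p)\,P_{p\bar p}$ is a composition of three invertible maps and is therefore nonsingular for every $p\in B_{\hat\delta}(\bar p)$, which is exactly the claim. I expect the only delicate point to be keeping the tangent-space identifications straight: one must ensure that the notion of continuity of $\nabla X$ is the one mediated by parallel transport, so that $\|\tilde A(p)-A\|$ is a meaningful quantity, and one must maintain $\hat\delta<\delta_{\bar p}$ throughout so that $P_{\bar p p}$ is single-valued. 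Everything else is the standard linear perturbation argument.
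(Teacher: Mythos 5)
Your argument is correct and is the standard one: conjugate $\nabla X(p)$ back to the fixed space $T_{\bar p}\mathbb{M}$ by parallel transport (so that continuity of $\nabla X$ at $\bar p$ becomes convergence in operator norm) and then apply the Banach perturbation lemma via a Neumann series. The paper does not prove this lemma itself but defers to Lemma~3.2 of Fernandes--Ferreira--Yuan, whose proof proceeds along exactly these lines, so your proposal matches the intended argument.
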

For any   $X\in {\cal X}(\Omega)$ and $p \in\mathbb{M}$ such that $\nabla X(p)$ is  nonsingular, the \textit{Newton's iterate mapping} $N_{X}$  at $p$ is defined  by 
\begin{equation}\label{NewtonIterateMapping}
N_{X}(p):=\exp_{p}(-\nabla X(p)^{-1}X(p)).
\end{equation}
In the folowing we  present a result about the behavior of the Newton's iterate mapping near a singularity of $X$, whose proof can be found in  \cite[Lemma~3.3]{FernandesAndFerreiraAndYuan2017}.
\begin{lemma}\label{L:contNx}
	Let $X:\Omega\to T\mathbb{M}$ a differentiable vector field and $p_{*}\in\mathbb{M}$. Assume  that $X(p_{*})=0$, $\nabla X$ is continuous at $p_{*}$ and $\nabla X(p_{*})$ is  nonsingular.  Then,   $\lim_{p\to p_{*}}[dN_{X}(p), p_{*})/d(p, p_{*})]=0.$
\end{lemma}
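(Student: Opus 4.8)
The plan is to estimate $d(N_X(p),p_*)$ by comparing the Newton direction $-\nabla X(p)^{-1}X(p)$ with the tangent vector $\exp_p^{-1}p_*$ that points from $p$ exactly to $p_*$. First I would use Lemma~\ref{Le:NonSing} to fix a radius $0<\hat\delta<\delta_{p_*}$ such that $\nabla X(p)$ is nonsingular on $B_{\hat\delta}(p_*)$; since inversion is continuous and $\nabla X$ is continuous at $p_*$, shrinking $\hat\delta$ guarantees a constant $c>0$ with $\|\nabla X(p)^{-1}\|\le c$ there, so $N_X(p)$ is well defined. For $p\in B_{\hat\delta}(p_*)$ we have $p_*=\exp_p(\exp_p^{-1}p_*)$, hence writing $N_X(p)=\exp_p(-\nabla X(p)^{-1}X(p))$ and applying the geodesic-spread bound $K_{p_*}$ (legitimate once the two tangent vectors are small enough, which holds for $p$ near $p_*$ because $X(p)\to 0$) yields
\begin{equation*}
d(N_X(p),p_*)\le K_{p_*}\,\bigl\|-\nabla X(p)^{-1}X(p)-\exp_p^{-1}p_*\bigr\|\le K_{p_*}\,c\,\bigl\|X(p)+\nabla X(p)\exp_p^{-1}p_*\bigr\|,
\end{equation*}
where the last inequality factors out $\nabla X(p)^{-1}$ and uses its bound. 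It therefore suffices to prove that the residual $\|X(p)+\nabla X(p)\exp_p^{-1}p_*\|$ is $o(d(p,p_*))$ as $p\to p_*$.

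To analyse the residual I would first record the exact identity $\exp_p^{-1}p_*=-P_{p_*p}\exp_{p_*}^{-1}p$, which holds because the velocity of the minimal geodesic from $p_*$ to $p$ is parallel, so its endpoint velocity is the parallel transport of its initial velocity. Next, applying the first-order expansion~\eqref{diferenciabilidade} with base point $p_*$ and using $X(p_*)=0$ gives $X(p)=P_{p_*p}\nabla X(p_*)\exp_{p_*}^{-1}p+d(p,p_*)r(p)$ with $r(p)\to 0$. Substituting both relations, the residual collapses to
\begin{equation*}
X(p)+\nabla X(p)\exp_p^{-1}p_*=\bigl[P_{p_*p}\nabla X(p_*)-\nabla X(p)P_{p_*p}\bigr]\exp_{p_*}^{-1}p+d(p,p_*)r(p).
\end{equation*}
Since $\|\exp_{p_*}^{-1}p\|=d(p,p_*)$ and $P_{pp_*}$ is an isometry, taking norms and conjugating the first operator back to $T_{p_*}\m$ produces the bound
\begin{equation*}
\bigl\|X(p)+\nabla X(p)\exp_p^{-1}p_*\bigr\|\le d(p,p_*)\Bigl(\bigl\|\nabla X(p_*)-P_{pp_*}\nabla X(p)P_{p_*p}\bigr\|+\|r(p)\|\Bigr).
\end{equation*}

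The main obstacle is the first term in the parentheses: I expect the delicate point to be justifying that continuity of $\nabla X$ at $p_*$ means exactly $\|\nabla X(p_*)-P_{pp_*}\nabla X(p)P_{p_*p}\|\to 0$ as $p\to p_*$, i.e.\ that the linear maps $\nabla X(p)$, transported back to $T_{p_*}\m$, converge in operator norm to $\nabla X(p_*)$; this is the genuine content of the continuity hypothesis in the Riemannian setting and is where that assumption is consumed. Granting this, together with $r(p)\to 0$, the right-hand side above is $o(d(p,p_*))$. Chaining this with the first display gives $d(N_X(p),p_*)\le K_{p_*}\,c\,o(d(p,p_*))$, so dividing by $d(p,p_*)$ and letting $p\to p_*$ yields $\lim_{p\to p_*}d(N_X(p),p_*)/d(p,p_*)=0$, as claimed.
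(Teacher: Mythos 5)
Your argument is correct: the paper gives no proof of this lemma, deferring to \cite[Lemma~3.3]{FernandesAndFerreiraAndYuan2017}, and your chain --- bounding $d(N_X(p),p_*)$ by $K_{p_*}\|\nabla X(p)^{-1}\|$ times the linearization residual, killing the residual via \eqref{diferenciabilidade}, the identity $\exp_p^{-1}p_*=-P_{p_*p}\exp_{p_*}^{-1}p$, and continuity of $\nabla X$ in the parallel-transport sense --- is exactly the standard argument that reference uses. All the delicate points (applicability of the $K_{p_*}$ bound for $p$ near $p_*$, the uniform bound on $\|\nabla X(p)^{-1}\|$ from Lemma~\ref{Le:NonSing}, and the operator-norm reading of continuity, which matches the paper's own convention for Lipschitz continuity of $\nabla X$) are correctly identified and handled.
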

The next result establishes  superlinear convergence of the Newton's method; see \cite[Theorem 3.1] {FernandesAndFerreiraAndYuan2017}.
\begin{theorem}\label{superlinearclassicalnewtononmanifold}
	Let $\Omega\subset\mathbb{M}$ be an open set, $X:\Omega\to T\mathbb{M}$ a differentiable vector field and $p_{*}\in\Omega$. Suppose that $p_{*}$ is a singularity of $X$, $\nabla X$ is continuous at $p_{*}$, and $\nabla X(p_{*})$ is  nonsingular. Then,  there exists  $\bar{\delta}>0$ such that,  for  all  $p_{0}\in B_{\bar{\delta}}(p_{*})$, the Newton sequence  $p_{k+1}=\exp_{p_{k}}(-\nabla X(p_{k})^{-1}X(p_{k}))$,  for all  $k=0, 1, \ldots $, 
	is well-defined, contained in $B_{\bar{\delta}}(p_{*})$, and converges superlinearly to $p_{*}$.
\end{theorem}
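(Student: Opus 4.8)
The plan is to derive the result directly from the two preparatory lemmas, treating it as a standard local-convergence argument organized around the Newton iterate mapping $N_{X}$. First I would use Lemma~\ref{Le:NonSing} to fix a radius $0<\hat{\delta}<\delta_{p_{*}}$ such that $B_{\hat{\delta}}(p_{*})\subset\Omega$ and $\nabla X(p)$ is nonsingular for every $p\in B_{\hat{\delta}}(p_{*})$. On this ball the Newton iterate mapping $N_{X}(p)=\exp_{p}(-\nabla X(p)^{-1}X(p))$ is well-defined, so that each step of the iteration can be formed; the only remaining issue is to guarantee that the iterates never leave a suitable ball.

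The key quantitative input is Lemma~\ref{L:contNx}, which gives $\lim_{p\to p_{*}}[d(N_{X}(p),p_{*})/d(p,p_{*})]=0$. Fixing any contraction constant $c\in(0,1)$, this limit lets me choose $\bar{\delta}\in(0,\hat{\delta})$ so that
\[
d(N_{X}(p),p_{*})\le c\,d(p,p_{*}),\qquad \text{for all } p\in B_{\bar{\delta}}(p_{*})\setminus\{p_{*}\}.
\]
I would then argue by induction on $k$ that the sequence is well-defined and stays inside $B_{\bar{\delta}}(p_{*})$: if $p_{k}\in B_{\bar{\delta}}(p_{*})$, then $\nabla X(p_{k})$ is nonsingular, so $p_{k+1}=N_{X}(p_{k})$ exists, and the displayed estimate yields $d(p_{k+1},p_{*})\le c\,d(p_{k},p_{*})<c\,\bar{\delta}<\bar{\delta}$, keeping the next iterate in the ball. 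Iterating the same estimate gives $d(p_{k},p_{*})\le c^{k}\,d(p_{0},p_{*})$, and since $c\in(0,1)$ this forces $p_{k}\to p_{*}$.

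For the superlinear rate I would invoke Lemma~\ref{L:contNx} a second time, now along the convergent sequence. Since $p_{k}\to p_{*}$ and $p_{k+1}=N_{X}(p_{k})$, the limit in that lemma gives
\[
\lim_{k\to\infty}\frac{d(p_{k+1},p_{*})}{d(p_{k},p_{*})}=\lim_{k\to\infty}\frac{d(N_{X}(p_{k}),p_{*})}{d(p_{k},p_{*})}=0,
\]
which is exactly superlinear convergence. One should also dispose of the harmless case where some $p_{k}=p_{*}$: then $X(p_{k})=0$ and the sequence is stationary thereafter, so the conclusion holds trivially.

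The argument is essentially routine once the two lemmas are in hand; the only point requiring genuine care is the interplay between the two radii. The nonsingularity radius $\hat{\delta}$ from Lemma~\ref{Le:NonSing} guarantees that every Newton step can be formed, but it is the strictly smaller contraction radius $\bar{\delta}$ extracted from Lemma~\ref{L:contNx} that makes $B_{\bar{\delta}}(p_{*})$ forward-invariant under $N_{X}$. Choosing a fixed $c<1$ (rather than using the limit only qualitatively) is what simultaneously delivers invariance of the ball, the geometric---hence convergent---decay of $d(p_{k},p_{*})$, and the foothold needed to read off the superlinear rate from the limit.
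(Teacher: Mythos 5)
Your proposal is correct and follows essentially the same route as the source the paper cites for this result (\cite[Theorem~3.1]{FernandesAndFerreiraAndYuan2017}): Lemma~\ref{Le:NonSing} for well-definedness of the Newton iterate mapping on a ball, the limit in Lemma~\ref{L:contNx} to extract a contraction constant $c<1$ giving forward invariance and geometric convergence, and the same limit applied along the sequence to read off the superlinear rate. The handling of the stationary case and the distinction between the nonsingularity radius $\hat{\delta}$ and the contraction radius $\bar{\delta}$ are both appropriate.
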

Let  $X\in {\cal X}(\Omega)$   and $\bar{p}\in \Omega$. The map $\nabla X$ is \textit{locally Lipschitz continuous at} $\bar p$ if and only if there exist a number $L>0$  and a $0<\delta_{L}<\delta_{p^{*}}$ such that $\parallel P_{pp^{*}}\nabla X(p) -\nabla X(p_{*})P_{pp_{*}}\parallel\leq L d(p,p_{*})$, for all  $p\in B_{\delta_{L}}(p_{*}).$ We end this section with a result which, up to some minor adjustments, merges into  \cite[Theorem 4.4]{Smith1994}, see also \cite[Theorem 7.1]{Ferreira2012}. 

\begin{theorem} \label{th:HV1}
	Let $X:{\Omega}\to T\mathbb{M}$ be a continuously differentiable
	vector field, and $p_{*}\in\mathbb{M}$ a singularity of $X$. Suppose that $\nabla X$ is locally Lipschitz continuous at $p_*$ with constant $L>0$, and $\nabla X (p_*)$ is nonsingular. Then, there exists a $r>0$ such that  the sequence $
	p_{k+1} =\exp_{p_k}\left(- \nabla X ({p_k}) ^{-1} X(p_k)\right)$,  for all $k=0,1,\ldots$,  starting  in $p_0\in B_{ r}(p_*)\setminus\{p_*\}$, is well-defined,  contained $B_{ r}(p_*)$, converges to $p_{*}$, and there holds
	$$
	d(p_*,p_{k+1})
	\leq\frac{L\,K_{p_*}^{2}\parallel\nabla X(p^{*})^{-1}\parallel}{2\left[K_{p_*}-d(p_{0},p_{*})L\|\nabla X(p^{*})^{-1}\|\right]}\,d(p_*,p_{k})^{2}, \qquad k=0,\,1,\ldots\, .
	$$
\end{theorem}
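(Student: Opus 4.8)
The plan is to run the classical Newton--Kantorovich argument in the Riemannian setting, reducing the whole statement to a single quadratic one-step inequality $d(p_*,N_X(p))\le C\,d(p_*,p)^2$ on a small ball and then closing by induction. I would fix $r<\delta_{p_*}$ small, so that for every iterate $p_k\in B_r(p_*)$ the vector $v_k:=\exp_{p_k}^{-1}p_*$ is well defined with $\|v_k\|=d(p_k,p_*)$ by Remark~\ref{unicidadedageodesica}, the spreading constant $K_{p_*}$ is applicable to vectors of this size, and, shrinking $r$ if necessary, $\nabla X(p_k)$ is invertible.

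First I would produce a uniform bound on $\nabla X(p_k)^{-1}$. Lemma~\ref{Le:NonSing} already furnishes nonsingularity near $p_*$; for the quantitative estimate I would regard $\nabla X(p_k)$ as a perturbation of the parallel transport $P_{p_*p_k}\nabla X(p_*)P_{p_kp_*}$, whose inverse has norm $\|\nabla X(p_*)^{-1}\|$ because parallel transport is an isometry. That same isometry gives the identity $\|\nabla X(p_k)-P_{p_*p_k}\nabla X(p_*)P_{p_kp_*}\|=\|P_{p_kp_*}\nabla X(p_k)-\nabla X(p_*)P_{p_kp_*}\|$, so the Lipschitz hypothesis bounds this difference by $L\,d(p_k,p_*)$. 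The Banach perturbation lemma then yields $\|\nabla X(p_k)^{-1}\|\le \|\nabla X(p_*)^{-1}\|/(1-L\|\nabla X(p_*)^{-1}\|\,d(p_k,p_*))$, valid once $r$ is small enough to keep the denominator positive; after collecting the spreading factors $K_{p_*}$ and bounding $d(p_k,p_*)\le d(p_0,p_*)$ this produces the denominator appearing in the stated constant.

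The heart of the proof is the one-step bound. Writing the Newton step as $s_k=-\nabla X(p_k)^{-1}X(p_k)$ and using $p_*=\exp_{p_k}v_k$, the definition of $K_{p_*}$ gives $d(p_*,p_{k+1})=d(\exp_{p_k}v_k,\exp_{p_k}s_k)\le K_{p_*}\|v_k-s_k\|$, and the identity $v_k-s_k=\nabla X(p_k)^{-1}(\nabla X(p_k)v_k+X(p_k))$ reduces matters to estimating $\|\nabla X(p_k)v_k+X(p_k)\|$. For this I would integrate along the geodesic $\gamma(t)=\exp_{p_k}(tv_k)$ joining $p_k$ to $p_*$: setting $h(t)=P_{\gamma,t,0}X(\gamma(t))$ and using that $\gamma$ is a geodesic, so $\gamma'(t)=P_{\gamma,0,t}v_k$, one finds $h'(t)=P_{\gamma,t,0}\nabla X(\gamma(t))P_{\gamma,0,t}v_k$; since $h(1)=X(p_*)=0$, the fundamental theorem of calculus gives $\nabla X(p_k)v_k+X(p_k)=\int_0^1(\nabla X(p_k)-P_{\gamma,t,0}\nabla X(\gamma(t))P_{\gamma,0,t})v_k\,dt$. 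The integrand vanishes at $t=0$, and bounding it by the Lipschitz estimate between $\gamma(t)$ and $p_k$, namely a constant times $L\,d(\gamma(t),p_k)=Lt\,d(p_k,p_*)$, and using $\int_0^1 t\,dt=\tfrac12$, produces $\|\nabla X(p_k)v_k+X(p_k)\|\le \tfrac{L}{2}d(p_k,p_*)^2$ up to the spreading factors. Combining the three estimates gives the quadratic inequality with the constant as stated.

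I would finish by induction on $k$. Choosing $r<\min\{\delta_{p_*},\hat\delta\}$ with $\hat\delta$ from Lemma~\ref{Le:NonSing}, small enough that the Banach denominator is positive and the resulting constant $C$ satisfies $C\,d(p_0,p_*)<1$, the one-step bound gives $d(p_*,p_1)\le C\,d(p_*,p_0)^2<d(p_*,p_0)<r$, so $p_1\in B_r(p_*)$ and every hypothesis holds again at $p_1$; iterating shows the sequence is well defined, stays in $B_r(p_*)$, and $d(p_*,p_k)\to0$ at quadratic rate. The main obstacle is the core estimate of the third paragraph: every comparison of $\nabla X$ at two points must be transported to a common tangent space, so one must keep the compositions $P_{\gamma,\cdot,\cdot}$ consistent and, crucially, apply the Lipschitz bound to the intermediate points $\gamma(t)$ relative to $p_k$ rather than routing through $p_*$, which would spoil the sharp factor $\tfrac12$. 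Pinning down the exact constant, in particular the two powers of the spreading factor $K_{p_*}$ and their placement in numerator and denominator, is then careful bookkeeping rather than a new idea.
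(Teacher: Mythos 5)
The paper does not actually prove Theorem~\ref{th:HV1}: it is imported, ``up to some minor adjustments,'' from \cite[Theorem~4.4]{Smith1994} and \cite[Theorem~7.1]{Ferreira2012}, so there is no internal proof to compare against. Your argument is the standard Newton--Kantorovich scheme that those references implement --- bound $d(p_*,p_{k+1})$ by $K_{p_*}\|v_k-s_k\|$, rewrite $v_k-s_k=\nabla X(p_k)^{-1}\left(\nabla X(p_k)v_k+X(p_k)\right)$, estimate the residual by integrating the covariant derivative along the connecting geodesic, and control $\|\nabla X(p_k)^{-1}\|$ by the Banach perturbation lemma together with the isometry of parallel transport --- so in approach you are aligned with the source of the theorem, and the induction that closes the argument is fine.

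There is, however, one concrete gap. The paper's definition of ``locally Lipschitz continuous at $p_*$'' only bounds $\|P_{pp_*}\nabla X(p)-\nabla X(p_*)P_{pp_*}\|\le L\,d(p,p_*)$: it compares $\nabla X$ at a nearby point against $\nabla X$ at the base point $p_*$, and nothing else. Your core estimate needs $\|\nabla X(p_k)-P_{\gamma,t,0}\nabla X(\gamma(t))P_{\gamma,0,t}\|\le L\,d(\gamma(t),p_k)$, a comparison between two generic points neither of which is $p_*$. You correctly observe that routing the comparison through $p_*$ would spoil the factor $\tfrac{1}{2}$ --- the triangle inequality gives $L(2-t)\,d(p_k,p_*)$ for the integrand, hence $\tfrac{3L}{2}$ after integration --- but you do not notice that the hypothesis as literally stated permits \emph{only} that routing. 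So with the paper's definition your proof still yields quadratic convergence, but with a worse constant than the one claimed; to obtain the stated $\tfrac{L}{2}$ one needs a two-point Lipschitz (majorant-type) condition along geodesics through $p_*$, which is what \cite{Ferreira2012} actually assumes and is presumably among the ``minor adjustments'' the paper alludes to. Either strengthen the hypothesis to that form or accept the degraded constant; as written, the step ``bounding it by the Lipschitz estimate between $\gamma(t)$ and $p_k$'' is not justified. The remaining bookkeeping (the placement of the two factors of $K_{p_*}$ and the exact shape of the Banach-lemma denominator) you explicitly leave open, which is acceptable for a sketch but is precisely where the stated constant gets pinned down.
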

%%%%%%%%%%%%%%%%%%%%%%%%%%
\section{Damped Newton Method}\label{NewtonGlobal}
%%%%%%%%%%%%%%%%%%%%%%%%%%
In this section we consider the problem of  finding  a singularity of a differentiable vector field. The formal statement of the problem is as follows:   Find  a point $p\in \mathbb{M}$ such that
\begin{equation} \label{eq:TheProblem}
X(p)=0, 
\end{equation}
where  $X: \Omega \subseteq   \mathbb{M}\to T\mathbb{M}$ is a differentiable vector field.  The  Newton's method applied  to this problem has {\it local} superlinear  convergence,  whenever the covariant derivative in the singularity  of $X$ is non-singular, see \cite{FernandesAndFerreiraAndYuan2017}.  Additionally, if  the covariant derivative is Lipschitz continous at singularity, then the method has $Q$-quadratic convergence, see \cite{Ferreira2012}. In order to globalize the Newton's method keeping the same properties aforementioned, we will use the same idea of the Euclidean
setting by introducing a linear search strategy. This  modification  of Newton's method  called   {\it  damped Newton's method}  performs  a   linear search   decreasing   the   merit function  $\varphi:\mathbb{M}\to\mathbb{R}$, 
\begin{equation}\label{eq:naturalfuncaomerito}
\varphi(p)=\dfrac{1}{2}\parallel X(p)\parallel^{2}, 
\end{equation}
in order to reach the  superlinear convergence region of the Newton's method. In the following we present the  formal description of the damped Newton's algorithm.

\vspace{0.4cm}
\hrule
\noindent
\\
\noindent
\centerline{\bf Damped Newton Method}
\\
\hrule
\begin{description}
	\item[Step 0.] Choose a scalar $\sigma\in(0,1/2)$, take an initial point $p_0 \in \mathbb{M}$, and set $k=0$;
	\item[Step 1.] Compute {\it search direction} $v_{k}\in T_{p_{k}}\mathbb{M}$ as a solution of the linear equation
	\begin{equation}\label{EQNEWTON12}
	X(p_{k})+\nabla X(p_{k})v=0.
	\end{equation}
	\hspace{.6cm }If $v_{k}$ exists, go to {\bf Step $2$}. Otherwise, set the search direction $v_{k}= -\grad\,\varphi(p_{k})$,  where $\varphi$ is defined by \eqref{eq:naturalfuncaomerito}, i.e.,  
	\begin{equation}\label{DirectionOfGradiente}
	v_{k}=-\nabla X(p_{k})^{*}X(p_{k}).
	\end{equation}
	\hspace{.5cm} If $v_{k}=0$, stop;
	\item[Step 2.] Compute the {\it stepsize} by the  rule 
	\begin{multline}\label{eq:ArmijoCond123456}
	\alpha_k:=\max \big{\{} 2^{-j}~: ~\varphi\left(\exp_{p_k}(2^{-j} v_{k})\right)\leq \varphi(p_k)+\\
	\sigma 2^{-j}\left\langle \grad\,\varphi(p_k), ~ v_{k} \right\rangle, ~j\in \mathbb{N} \big{\}};
	\end{multline}
	\hspace{.6cm} and set the {\it next iterated}  as 
	\begin{equation} \label{eq:NM12}
	p_{k+1}:=\exp_{p_{k}}(\alpha_{k}v_{k});
	\end{equation}
	\item[Step 3.] Set $k\leftarrow k+1$ and go to {\bf Step 1}.
\end{description}
\hrule

\noindent
\vspace{0.005cm} 

We remark that in {\bf Step 1}  we  resort  directly to the steepest descent step of $\varphi$   as a safeguard, but only in those cases when the Newtonian direction in \eqref{EQNEWTON12} does not exist. This idea has been used, in Euclidian context, for nonlinear complementarity problems,  see for example  \cite{FacchineiAndKanzow1997} and \cite[Chap.~5, Sec.~5.1.1]{Solodov2014}. 
%%%%%%%%%%%%%%%%%%%%%%%%
\subsection{Preliminaries}
In this subsection we present some preliminaries results in order to ensure well-definition and convergence of the sequence generated by the damped Newton's method. We begin  with an useful result for  establishing the well-defined sequence.
\begin{lemma}\label{L:DescDir}
	Let $p\in \Omega$ such that  $X(p)\neq 0$.  Assume that   $v=-\nabla X(p)^{*}X(p)$ or  that  $v$   is a solution of the following equation 
	\begin{equation}\label{eq: GeneralDiretion}
	X(p)+\nabla X(p)v=0.
	\end{equation}
	If $v\neq 0$, then $v$  is a descent direction  for  $\varphi$ from $p$, i.e., $ \left\langle\grad\,\varphi(p),  v\right\rangle <0$ .
\end{lemma}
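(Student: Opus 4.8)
The plan is to show that in both cases the inner product $\langle \grad\,\varphi(p), v\rangle$ is strictly negative. The essential first step is to compute $\grad\,\varphi(p)$ explicitly. Since $\varphi(p) = \tfrac{1}{2}\|X(p)\|^2$, differentiating and using the defining relation $\langle \grad\,\varphi(p), w\rangle = d\varphi(p)(w)$ from \eqref{eq:GradHess}, I expect to obtain
\begin{equation}\nonumber
\grad\,\varphi(p) = \nabla X(p)^{*} X(p),
\end{equation}
where $\nabla X(p)^{*}$ is the conjugate of the linear map $\nabla X(p)$ defined in Section~\ref{sec:basic}. This identity is exactly what makes the two cases of the lemma consistent: the steepest-descent safeguard direction in \eqref{DirectionOfGradiente} is literally $v = -\grad\,\varphi(p)$.

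With this gradient formula in hand, the two cases follow quickly. In the first case, $v = -\nabla X(p)^{*}X(p) = -\grad\,\varphi(p)$, so
\begin{equation}\nonumber
\langle \grad\,\varphi(p), v\rangle = -\|\grad\,\varphi(p)\|^2 = -\|v\|^2 < 0,
\end{equation}
the strict inequality holding because $v \neq 0$ by hypothesis. In the second case, $v$ solves $X(p) + \nabla X(p)v = 0$, i.e., $\nabla X(p)v = -X(p)$. Then, using the gradient formula and the definition of the conjugate map,
\begin{equation}\nonumber
\langle \grad\,\varphi(p), v\rangle = \langle \nabla X(p)^{*}X(p), v\rangle = \langle X(p), \nabla X(p)v\rangle = \langle X(p), -X(p)\rangle = -\|X(p)\|^2.
\end{equation}
Since $X(p) \neq 0$ by hypothesis, this quantity is strictly negative, completing the argument.

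The main obstacle is really the gradient computation, and it is mild: one must justify differentiating $p \mapsto \tfrac{1}{2}\|X(p)\|^2$ in the Riemannian setting and arrive at $\grad\,\varphi = \nabla X^{*} X$. The clean way is to evaluate the directional derivative along a curve. For $w \in T_p\mathbb{M}$, take a curve with $\gamma(0) = p$, $\gamma'(0) = w$, and compute $\tfrac{d}{dt}\big|_{t=0} \tfrac{1}{2}\langle X(\gamma(t)), X(\gamma(t))\rangle$; because the Riemannian metric is parallel with respect to the Levi-Civita connection, this derivative equals $\langle \nabla_w X(p), X(p)\rangle = \langle \nabla X(p)w, X(p)\rangle = \langle w, \nabla X(p)^{*}X(p)\rangle$, which identifies $\grad\,\varphi(p) = \nabla X(p)^{*}X(p)$ by \eqref{eq:GradHess}. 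Everything after this is a two-line application of the definition of the conjugate map together with the respective hypotheses $v \neq 0$ and $X(p) \neq 0$.
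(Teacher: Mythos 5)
Your proposal is correct and follows essentially the same route as the paper: identify $\grad\,\varphi(p)=\nabla X(p)^{*}X(p)$, then treat the Newton direction via $\langle\grad\,\varphi(p),v\rangle=\langle X(p),\nabla X(p)v\rangle=-\|X(p)\|^{2}$ and the safeguard direction via $\langle\grad\,\varphi(p),v\rangle=-\|\nabla X(p)^{*}X(p)\|^{2}$. The only difference is that you spell out the derivation of the gradient formula (via a curve and parallelism of the metric), which the paper simply asserts.
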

\begin{proof}
	First we assume that $v$ satisfies  \eqref{eq: GeneralDiretion}.  Since $\grad\,\varphi(p)=\nabla X(p)^{*}X(p)$, there is  $\left\langle\grad\,\varphi(p),  v\right\rangle= \left\langle X(p),  \nabla X(p)v\right\rangle$. It follows from $X(p)\neq 0$ and  $v$ satisfying  \eqref{eq: GeneralDiretion} that  $\left\langle\grad\,\varphi(p),  v\right\rangle= -\parallel X(p)\parallel^{2}<0$,  which implies that $v$  is a descent direction  for  $\varphi$ from $p$. Now, assume that   $v=-\nabla X(p)^{*}X(p)$.  Thus $\left\langle\grad\,\varphi(p),  v\right\rangle=-\|\nabla X(p)^{*}X(p)\|^2<0$ and   $v$ is also a descent direction  for  $\varphi$ from $p$.
\end{proof}
Under suitable assumptions  the  following result  guarantees that the damped Newton's method, after a finite quantity of iterates, reduces to the classical iteration of Newton's method.
\begin{lemma}\label{Le:SupLinConPhi}
	Let  $\bar{p}\in\mathbb{M}$ satisfy $X(\bar p)=0$.  If $\nabla X$ and $\nabla X(\bar p)$  are continuous at $\bar p$ and nonsingular respectively,  then  there exists $0<\hat{\delta}<\delta_{\bar{p}}$ such that $B_{\hat{\delta}}(\bar{p})\subset \Omega$, $\nabla X(p)$ is nonsingular  for each $p\in B_{\hat{\delta}}(\bar{p})$ and 
	\begin{equation}\label{eq:SupelMerit}
	\lim_{p\to \bar p}\frac{\varphi\left(N_{X}(p)\right)}{\parallel X(p)\parallel^{2}}=0.
	\end{equation}
	As a consequence, there exists a $\delta>0$ such that,  for all  $\sigma\in(0,1/2)$ and $\delta< \hat{\delta}$  there holds
	\begin{equation}\label{eq:ArmijoCond}
	\varphi\left(N_{X}(p)\right) \leq \varphi\left(p)\right)+ \sigma \left\langle\grad\,\varphi(p), -\nabla X(p)^{-1}X(p)\right\rangle, \qquad  \forall~p\in B_{\delta}(\bar{p}).
	\end{equation}
\end{lemma}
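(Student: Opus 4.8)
The plan is to establish \eqref{eq:SupelMerit} first, since the inequality \eqref{eq:ArmijoCond} will follow from it by a short limiting argument. To set up the neighborhood, I would invoke Lemma~\ref{Le:NonSing} applied at $\bar p$: since $\nabla X$ is continuous at $\bar p$ and $\nabla X(\bar p)$ is nonsingular, there is $0<\hat\delta<\delta_{\bar p}$ with $B_{\hat\delta}(\bar p)\subset\Omega$ and $\nabla X(p)$ nonsingular for every $p\in B_{\hat\delta}(\bar p)$. On this ball the Newton iterate mapping $N_X$ of \eqref{NewtonIterateMapping} is well-defined, so all quantities in \eqref{eq:SupelMerit} make sense.

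For the limit itself, the idea is to rewrite the quotient as a product that decouples the two behaviors near $\bar p$. Using $\varphi(q)=\tfrac12\|X(q)\|^2$, I would write
\begin{equation}\nonumber
\frac{\varphi(N_X(p))}{\|X(p)\|^2}=\frac12\,\frac{\|X(N_X(p))\|^2}{d(N_X(p),\bar p)^2}\cdot\frac{d(N_X(p),\bar p)^2}{d(p,\bar p)^2}\cdot\frac{d(p,\bar p)^2}{\|X(p)\|^2},
\end{equation}
valid for $p\neq\bar p$. The middle factor tends to $0$ as $p\to\bar p$ by Lemma~\ref{L:contNx} (indeed its square root does, so the factor itself vanishes). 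For the third factor I would use expansion \eqref{diferenciabilidade} at $\bar p$: since $X(\bar p)=0$ and parallel transport is an isometry, $X(p)=P_{\bar pp}\nabla X(\bar p)\exp^{-1}_{\bar p}p+d(p,\bar p)r(p)$ with $r(p)\to0$, and because $\|\exp^{-1}_{\bar p}p\|=d(p,\bar p)$ while $\nabla X(\bar p)$ is nonsingular, one gets $\|X(p)\|/d(p,\bar p)$ bounded below by a positive constant near $\bar p$; hence $d(p,\bar p)^2/\|X(p)\|^2$ stays bounded. The first factor is the same ratio $\|X(q)\|^2/d(q,\bar p)^2$ evaluated at $q=N_X(p)$, which is bounded above near $\bar p$ by the same expansion (now the upper bound on $\|X(q)\|/d(q,\bar p)$), and $N_X(p)\to\bar p$ as $p\to\bar p$ by Lemma~\ref{L:contNx}. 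A bounded factor times a bounded factor times a factor going to $0$ yields \eqref{eq:SupelMerit}.

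To pass from \eqref{eq:SupelMerit} to the Armijo-type inequality \eqref{eq:ArmijoCond}, I would recall that $\grad\,\varphi(p)=\nabla X(p)^*X(p)$, so that with the full Newton step $v=-\nabla X(p)^{-1}X(p)$ one computes, exactly as in Lemma~\ref{L:DescDir}, $\langle\grad\,\varphi(p),v\rangle=\langle X(p),\nabla X(p)v\rangle=-\|X(p)\|^2$. Therefore the desired inequality $\varphi(N_X(p))\le\varphi(p)-\sigma\|X(p)\|^2$ is equivalent, after dividing by $\|X(p)\|^2$ (for $p\neq\bar p$, using that $X(p)\neq0$ on a punctured neighborhood since $\nabla X(\bar p)$ is nonsingular), to $\varphi(N_X(p))/\|X(p)\|^2\le\tfrac12-\sigma$. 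Since $\sigma<1/2$, the right-hand side is a fixed positive number, and \eqref{eq:SupelMerit} guarantees a $\delta\le\hat\delta$ so that the left-hand side is below it for all $p\in B_\delta(\bar p)\setminus\{\bar p\}$; the case $p=\bar p$ is trivial because both sides vanish.

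The main obstacle I anticipate is the control of the first factor, $\|X(N_X(p))\|^2/d(N_X(p),\bar p)^2$, since one must ensure $N_X(p)$ lies in the region where the expansion \eqref{diferenciabilidade} delivers a uniform upper bound and simultaneously stays away from making the denominator vanish; this is handled by shrinking $\hat\delta$ so that $N_X(p)$ remains in a fixed small ball (via the continuity in Lemma~\ref{L:contNx}) where $\|X(q)\|/d(q,\bar p)$ is two-sidedly bounded. Everything else is bookkeeping with the isometry of parallel transport and the nonsingularity of $\nabla X(\bar p)$.
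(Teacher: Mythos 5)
Your proof is correct and follows essentially the same route as the paper's: the same three-factor decomposition of $\varphi(N_X(p))/\|X(p)\|^2$, with the middle factor killed by Lemma~\ref{L:contNx}, the third factor bounded via the lower bound $\|X(p)\|\geq c\,d(p,\bar p)$ that the nonsingularity of $\nabla X(\bar p)$ and \eqref{diferenciabilidade} provide, and the first factor controlled by the expansion at $\bar p$ (the paper writes this last piece as the direct bound $\varphi(N_X(p))\leq C\,d^{2}(N_X(p),\bar p)$, which sidesteps the degenerate case $N_X(p)=\bar p$ you flag; there the whole quotient is simply zero since $X(\bar p)=0$). The passage to the Armijo inequality via $\left\langle\grad\,\varphi(p),-\nabla X(p)^{-1}X(p)\right\rangle=-\|X(p)\|^{2}$ and the threshold $\tfrac{1}{2}-\sigma$ is also exactly the paper's argument.
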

\begin{proof}
	By  Lemma~\ref{Le:NonSing}  there exists $0<\hat{\delta}<\delta_{\bar{p}}$ such that $B_{\hat{\delta}}(\bar{p})\subset \Omega$ and $\nabla X(p)$ is nonsingular  for each $p\in B_{\hat{\delta}}(\bar{p})$. Define $v_{p}=   -\nabla X(p)^{-1}X(p)$, for all $p\in B_{\hat{\delta}}(\bar{p})$. Since $X(\bar p)=0$  and $\nabla X$ is continuous and nonsingular at $\bar p$,   there is ${\lim_ {p\to \bar{p}}}v_{p}=0$. Moreover,  it follows from  \eqref{diferenciabilidade}, the  isometry of the parallel transport and  $\|\exp^{-1}_{\bar{p}}\exp_{p}(v_{p})\| =d(\exp_{p}(v_{p}), \bar{p})$ that
	\begin{multline}\nonumber
	\varphi\left(N_{X}(p)\right)=  \dfrac{1}{2}\parallel X\left(\exp_{p}(v_{p})\right)-P_{\bar p \exp_{p}(v_{p})}X(\bar p)\parallel^{2}\leq\\
	  \left[\| \nabla X(\bar{p})\| + \| r(\exp_{p}(v_{p}))\|\right]^2 d^2(\exp_{p}(v_{p}), \bar{p}). 
	\end{multline}
	Hence, after some simple  algebraic manipulations and  from the last inequality we obtain for all $p\in B_{\hat{\delta}}(\bar{p})\backslash\{\bar{p}\}$,
	\begin{equation}\label{sssss}
	\dfrac{\varphi\left(\exp_{p}(v_{p})\right)}{\parallel X(p)\parallel^{2}} \leq  \left[\| \nabla X(\bar{p})\| + \| r(\exp_{p}(v_{p}))\|\right]^2 \dfrac{ d^2(\exp_{p}(v_{p}), \bar{p})}{d^{2}(p, \bar p)}\dfrac{d^{2}(p, \bar p)}{\parallel X(p)\parallel^{2}}.
	\end{equation}
	On the other hand,  owing that  $X(\bar p)=0$ and $\nabla X(\bar p)$ is nonsingular, it is easy to see that 
	\begin{multline}\label{eq:AAABBB}
	\exp^{-1}_{\bar p}p=-\nabla X(\bar p)^{-1}\left[P_{p\bar{p}}X(p)- X(\bar p)- \nabla X(\bar p)\exp^{-1}_{\bar p}p\right]+\\
	 \nabla X(\bar p)^{-1}P_{p\bar{p}}X(p).
	\end{multline}
	Using again  \eqref{diferenciabilidade}  and that  $\nabla X(\bar p)$ is nonsingular, we conclude that there exists $0<\bar{\delta}<\delta_{\bar{p}}$ such that 
	$$
	\left\|\nabla X(\bar p)^{-1}\left[P_{p\bar{p}}X(p)- X(\bar p)- \nabla X(\bar p)\exp^{-1}_{\bar p}p\right]\right\| \leq \frac{1}{2}    d(p, \bar{p}), \qquad  \forall ~p\in B_{\bar{\delta}}(\bar{p}).
	$$
	Thus, combining \eqref{eq:AAABBB} with last inequality and considering  that $d(\bar{p}, p)=\|\exp^{-1}_{\bar p}p\|$ we obtain that  $d(\bar{p}, p)\leq   d(p, \bar{p})/2  + \|\nabla X(\bar p)^{-1}P_{p\bar{p}}X(p)\|$, for all $p\in B_{\bar{\delta}}(\bar{p})$, which implies that  $d^2(\bar{p}, p)\leq  \left[2 \|\nabla X(\bar p)^{-1}\|\right]^2\|X(p)\|^2$, for all $p\in B_{\bar{\delta}}(\bar{p})$. Hence, letting $ \tilde{\delta}=\min\{ \hat{\delta}, \bar {\delta} \}$ we obtain from \eqref{sssss} that for all $p\in B_{\tilde{\delta}}(\bar{p})\backslash\{\bar{p}\}$, 
	$$
	\dfrac{\varphi\left(\exp_{p}(v_{p})\right)}{\parallel X(p)\parallel^{2}} \leq  \left[2\|\nabla X(\bar{p})^{-1}\|\left( \| \nabla X(\bar{p})\| + \| r(\exp_{p}(v_{p}))\|\right)\right]^2 \dfrac{ d^2(\exp_{p}(v_{p}), \bar{p})}{d^{2}(p, \bar p)}.  
	$$
	Therefore, it follows from Lemma~\ref{L:contNx} and  $\lim_{p\to \bar{p}}r(\exp_{p}(v_{p}))=0$ that the equality  \eqref{eq:SupelMerit} by taking limit, as $p$ goes to $\bar{p}$,  in the latter  inequality.  For proving \eqref{eq:ArmijoCond}, we first use \eqref{eq:SupelMerit} for concluding that  there exists a $\delta>0$  such that,  $\delta< \hat{\delta}$ and for   $\sigma \in (0,1/2)$ there is
	$$
	\varphi\left(N_{X}(p)\right) \leq  \dfrac{1-2\sigma}{2}\parallel X(p)\parallel^{2},\quad \forall~ p\in B_{\delta}(\bar p).
	$$
	Taking into account  that $\grad\,\varphi(p)= \nabla X(p)^{*}X(p)$ we can conclude directly that $\left\langle\grad\,\varphi(p),  -\nabla X(p)^{-1}X(p)\right\rangle= -\| X(p)\|^{2}$, then the  last inequality  is equivalent to \eqref{eq:ArmijoCond} and then  the proof is complete.
\end{proof}
In the next result we show that whenever the vector field  is continuous with  nonsingular  covariant derivative at a singularity, there exists a neighborhood  which  is  invariant by the Newton's iterate mapping associated.
\begin{lemma}\label{L:InvNx}
	Let  $\bar{p}\in\mathbb{M}$ such that $X(\bar p)=0$. If $\nabla X$ is continuous at $\bar p$ and $\nabla X(\bar{p})$ is nonsingular,  then  there exists $0<\hat{\delta}<\delta_{\bar{p}}$ such that $B_{\hat{\delta}}(\bar{p})\subset \Omega$ and $\nabla X(p)$ is nonsingular  for each $p\in B_{\hat{\delta}}(\bar{p})$. Moreover,    $N_{X}(p)\in B_{\hat{\delta}}(\bar{p})$, for all $p\in B_{\hat{\delta}}(\bar{p})$.
\end{lemma}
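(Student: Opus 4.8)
The plan is to split the statement into its two assertions and dispatch each with one of the earlier lemmas, then merge their radii at the end. The first assertion---existence of $0<\hat{\delta}<\delta_{\bar p}$ with $B_{\hat{\delta}}(\bar p)\subset\Omega$ and $\nabla X(p)$ nonsingular on that ball---is exactly the conclusion of Lemma~\ref{Le:NonSing}, whose hypotheses (continuity of $\nabla X$ at $\bar p$ and nonsingularity of $\nabla X(\bar p)$) coincide with those assumed here. So I would begin by invoking Lemma~\ref{Le:NonSing} to produce a radius $\delta_1>0$ on which $N_X$ is well-defined through \eqref{NewtonIterateMapping}, since $\nabla X(p)^{-1}$ exists throughout $B_{\delta_1}(\bar p)$.

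For the invariance assertion, the workhorse is Lemma~\ref{L:contNx}, which under the same hypotheses yields $\lim_{p\to\bar p} d(N_X(p),\bar p)/d(p,\bar p)=0$. Since this limit equals $0$, and in particular is below $1$, there is a radius $\delta_2>0$ such that $d(N_X(p),\bar p)<d(p,\bar p)$ for every $p\in B_{\delta_2}(\bar p)\setminus\{\bar p\}$. Setting $\hat{\delta}=\min\{\delta_1,\delta_2\}$, I then treat the punctured ball and its center separately. For $p\in B_{\hat{\delta}}(\bar p)\setminus\{\bar p\}$ the above inequality gives $d(N_X(p),\bar p)<d(p,\bar p)<\hat{\delta}$, hence $N_X(p)\in B_{\hat{\delta}}(\bar p)$. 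For the center, the equality $X(\bar p)=0$ forces $-\nabla X(\bar p)^{-1}X(\bar p)=0_{\bar p}$, so $N_X(\bar p)=\exp_{\bar p}(0_{\bar p})=\bar p\in B_{\hat{\delta}}(\bar p)$.

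Combining the two cases yields $N_X(p)\in B_{\hat{\delta}}(\bar p)$ for all $p\in B_{\hat{\delta}}(\bar p)$, which is the claimed invariance. The argument is essentially a quotient-to-contraction bookkeeping step, so I do not expect a genuine obstacle; the one point that needs care is that Lemma~\ref{L:contNx} controls the distance ratio only on the \emph{punctured} ball, whence the fixed point $\bar p$ must be verified by hand as above. The only remaining detail is to bundle the two radii $\delta_1$ (nonsingularity and well-definedness) and $\delta_2$ (strict decrease of distance) into the single $\hat{\delta}$ that simultaneously guarantees both properties, which is precisely why I intersect them.
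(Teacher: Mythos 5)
Your proposal is correct and follows essentially the same route as the paper: invoke Lemma~\ref{Le:NonSing} for nonsingularity on a ball, then use the limit in Lemma~\ref{L:contNx} to shrink the radius until $d(N_X(p),\bar p)<d(p,\bar p)$, which gives invariance. The only (harmless) difference is that you bound the ratio by $1$ while the paper uses $1/2$, and you explicitly check the center point $\bar p$, which the paper leaves implicit.
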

\begin{proof}
	It follows from Lemma~\ref{Le:NonSing}  that there exists $0<\hat{\delta}<\delta_{\bar{p}}$ such that $B_{\hat{\delta}}(\bar{p})\subset \Omega$ and  $\nabla X(p)$ is nonsingular  for each $p\in B_{\hat{\delta}}(\bar{p})$. Thus,  shirking $\hat{\delta}$  if necessary,  we can use Lemma~\ref{L:contNx}  to conclude  that $d(N_{X}(p), \bar{p})<d(p, \bar{p})/2$, for all $p\in B_{\hat{\delta}}(p_{*})$, which implies  the last statement of the lemma.
\end{proof}
%%%%%%%%%%%%%%%%%%%%%%%
\subsection{Convergence Analysis}
In this subsection we  establish our  main result.  We shall prove a result on the    global convergence of  the damped Newton's method  preserving the fast convergence  rates  of the  Newton's one.  We begin by proving the well-definition of the sequence generated by the  damped Newton's method.
\begin{lemma}\label{boadef_Damped1}
	The  sequence $\{p_k\}$ generated by the  damped Newton's method  is well-defined.
\end{lemma}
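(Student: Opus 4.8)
The plan is to argue by induction on $k$ that every branch of the algorithm returns a well-defined quantity, so that either the method halts after finitely many iterations (when some $v_k=0$) or it produces an infinite sequence $\{p_k\}$. The only step that is not immediate from the construction is the termination of the Armijo line search in \textbf{Step 2}: I must show that the set appearing in \eqref{eq:ArmijoCond123456} is nonempty and that the maximum defining $\alpha_k$ is attained. Everything else is bookkeeping.

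First I would check that the search direction in \textbf{Step 1} is always available. When $\nabla X(p_k)$ is nonsingular, equation \eqref{EQNEWTON12} has the unique solution $v_k=-\nabla X(p_k)^{-1}X(p_k)$; otherwise the algorithm takes the gradient step $v_k=-\nabla X(p_k)^{*}X(p_k)$ from \eqref{DirectionOfGradiente}, which is defined unconditionally. Hence $v_k\in T_{p_k}\mathbb{M}$ always exists. If $v_k=0$ the method stops and there is nothing to prove, so I may assume $v_k\neq 0$. In both branches $v_k\neq 0$ forces $X(p_k)\neq 0$ (in the Newton branch because $\nabla X(p_k)v_k=-X(p_k)$ with $\nabla X(p_k)$ invertible, and in the gradient branch because $X(p_k)=0$ would give $v_k=0$), so the hypotheses of Lemma~\ref{L:DescDir} are satisfied.

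Next, with $v_k\neq 0$ and $X(p_k)\neq 0$, Lemma~\ref{L:DescDir} yields the descent inequality $\langle \grad\,\varphi(p_k),\,v_k\rangle<0$. To handle the line search I would introduce the scalar function $\psi(t):=\varphi(\exp_{p_k}(t v_k))$, which is differentiable because $X$, hence $\varphi$, and the exponential map are differentiable. Since $\tfrac{d}{dt}\exp_{p_k}(tv_k)\big|_{t=0}=v_k$, the chain rule gives $\psi'(0)=\langle \grad\,\varphi(p_k),\,v_k\rangle<0$. Writing the first-order expansion $\psi(t)=\psi(0)+t\,\psi'(0)+o(t)$ and using $\sigma\in(0,1/2)\subset(0,1)$, the Armijo inequality $\psi(t)\le \psi(0)+\sigma t\,\psi'(0)$ is equivalent to $(1-\sigma)t\,\psi'(0)+o(t)\le 0$, which holds for all sufficiently small $t>0$ because $(1-\sigma)\psi'(0)<0$. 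Taking $j$ large enough that $2^{-j}$ falls in this range shows that the set in \eqref{eq:ArmijoCond123456} is nonempty.

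Finally, that set is a nonempty subset of $\{2^{-j}:j\in\mathbb{N}\}$ and thus corresponds to a nonempty subset of $\mathbb{N}$, which has a least element by well-ordering; this least index produces the largest admissible $2^{-j}$, so the maximum in \eqref{eq:ArmijoCond123456} is attained and $\alpha_k\in(0,1]$ is well-defined. Completeness of $\mathbb{M}$ then makes $p_{k+1}=\exp_{p_k}(\alpha_k v_k)$ well-defined, which closes the induction. I expect the termination of the line search to be the only real obstacle; once the descent property of Lemma~\ref{L:DescDir} is available, it is the standard backtracking argument, the only extra care being to pass from the continuous parameter $t$ to the discrete dyadic steplengths $2^{-j}$ and to confirm that the maximum over them is attained.
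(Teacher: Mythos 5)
Your argument is correct and takes essentially the same route as the paper: the paper likewise reduces everything to the descent property of Lemma~\ref{L:DescDir} and then appeals to the ``ordinary argument'' for termination of the Armijo search (which you have written out in full via the expansion of $t\mapsto\varphi(\exp_{p_k}(tv_k))$), closing by induction. The only caveat is that your claim that $v_k\neq 0$ forces $X(p_k)\neq 0$ in the Newton branch presupposes $\nabla X(p_k)$ invertible, whereas \eqref{EQNEWTON12} may admit nonzero solutions with $\nabla X(p_k)$ singular; the paper sidesteps this degenerate case by simply assuming $X(p_k)\neq 0$, since otherwise the current iterate is already a singularity and the sequence is regarded as finite.
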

\begin{proof}
	If $v_0\neq 0$ and $X(p_0)\neq 0$, then by using Lemma~\ref{L:DescDir} we can conclude that   $v_0$ satisfying     \eqref{EQNEWTON12} or  \eqref{DirectionOfGradiente}   is a descent direction  for  $\varphi$ from $p_0$. Hence, by using ordinary argument we conclude that $\alpha_{0}$  in \eqref{eq:ArmijoCond123456} is well-defined, and $p_{1}$ in \eqref{eq:NM12} is also well-defined. Therefore, by using induction argument  we can prove that the sequence   $\{p_k\}$  is well-defined.
\end{proof}
Note  that,   if the sequence generated by the damped Newton's method is finite, then the last iterate is a solution of \eqref{eq:TheProblem} or  a critical point of  $\varphi$ defined in \eqref{eq:naturalfuncaomerito}. Thus, we can assume that  $\{p_k\}$  is infinite,    $v_k\neq 0$ and $X(p_k)\neq 0$,  for all $k=0, 1, \ldots$

\begin{theorem}\label{theoremAuxiliary123}
	Let $X: \Omega \subseteq   \mathbb{M}\to T\mathbb{M}$ be a differentiable vector field.  Assume that   $\left\{p_{k}\right\}$ generated by  the Damped Newton's Method has  an accumulation point   $\bar p\in \Omega$ such that $\nabla X$ is  continuous at $\bar p$ with nonsingular $\nabla X(\bar p)$. Then, $\left\{p_{k}\right\}$  converges superlinearly to the singularity $\bar p$ of $X$.  Moreover, the  convergence rate is quadratic provided that $\nabla X$ is locally Lipschitz continuous at $\bar p$.
\end{theorem}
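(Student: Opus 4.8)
The plan is to prove the statement in three stages: first that the accumulation point $\bar p$ is a singularity of $X$, then that the \emph{entire} sequence $\{p_k\}$ converges to $\bar p$, and finally to read off the rate by showing the iteration eventually coincides with the pure Newton iteration. Since $\{p_k\}$ has an accumulation point it is infinite, so by the remark following Lemma~\ref{boadef_Damped1} I may assume $v_k\neq 0$ and $X(p_k)\neq 0$ for all $k$. Throughout I use that, by Lemma~\ref{L:DescDir} together with the Armijo rule \eqref{eq:ArmijoCond123456}, every step satisfies $\varphi(p_{k+1})\le\varphi(p_k)+\sigma\alpha_k\langle\grad\,\varphi(p_k),v_k\rangle$ with $\langle\grad\,\varphi(p_k),v_k\rangle<0$; hence $\{\varphi(p_k)\}$ is strictly decreasing and, being bounded below by $0$, converges to some $\varphi^{\ast}\ge 0$. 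By continuity of $\varphi$ and the existence of a subsequence $p_{k_j}\to\bar p$, one gets $\varphi(\bar p)=\varphi^{\ast}$, so it suffices to show $\varphi^{\ast}=0$.

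First I would show $X(\bar p)=0$. Since $\nabla X(\bar p)$ is nonsingular and $\nabla X$ is continuous at $\bar p$, Lemma~\ref{Le:NonSing} gives a ball $B_{\epsilon}(\bar p)$ on which $\nabla X$ is nonsingular, so for $j$ large the Newton direction is used: $v_{k_j}=-\nabla X(p_{k_j})^{-1}X(p_{k_j})$ and $\langle\grad\,\varphi(p_{k_j}),v_{k_j}\rangle=-\|X(p_{k_j})\|^{2}$. I distinguish two cases according to $\liminf_j\alpha_{k_j}$. If $\liminf_j\alpha_{k_j}=c>0$, then telescoping the sufficient-decrease inequality over $k$ gives $\sum_k\sigma\alpha_k|\langle\grad\,\varphi(p_k),v_k\rangle|\le\varphi(p_0)-\varphi^{\ast}<\infty$, so $\alpha_{k_j}\|X(p_{k_j})\|^{2}\to 0$, whence $\|X(p_{k_j})\|\to 0$ and $X(\bar p)=0$. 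If instead $\liminf_j\alpha_{k_j}=0$, I pass to a further subsequence with $\alpha_{k_j}\to 0$; then the step $2\alpha_{k_j}\le 1$ violated the Armijo test, so the mean value theorem applied to $t\mapsto\varphi(\exp_{p_{k_j}}(t\,v_{k_j}))$ produces $t^{\ast}_j\in(0,2\alpha_{k_j})$ with $\langle\grad\,\varphi(\exp_{p_{k_j}}(t^{\ast}_j v_{k_j})),P_{k_j}v_{k_j}\rangle>\sigma\langle\grad\,\varphi(p_{k_j}),v_{k_j}\rangle$, where $P_{k_j}$ is the parallel transport along the geodesic. Letting $j\to\infty$, using $t^{\ast}_j\to 0$, $v_{k_j}\to-\nabla X(\bar p)^{-1}X(\bar p)$, the continuity of $\grad\,\varphi=\nabla X^{\ast}X$ at $\bar p$, and that parallel transport over a vanishing geodesic tends to the identity, yields $(1-\sigma)\langle\grad\,\varphi(\bar p),-\nabla X(\bar p)^{-1}X(\bar p)\rangle\ge 0$, i.e. $-(1-\sigma)\|X(\bar p)\|^{2}\ge 0$. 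Hence $X(\bar p)=0$ in both cases, and since $\nabla X(\bar p)^{\ast}$ is nonsingular, $\bar p$ is the claimed singularity.

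Next I would upgrade subsequential convergence to convergence of the whole sequence. Having $X(\bar p)=0$, I may now invoke Lemma~\ref{Le:SupLinConPhi} and Lemma~\ref{L:InvNx}: there is a $\delta>0$ with $B_{\delta}(\bar p)\subset\Omega$ on which $\nabla X$ is nonsingular, the full Newton step $\alpha_k=1$ satisfies \eqref{eq:ArmijoCond}, and $N_X(p)\in B_{\delta}(\bar p)$ with $d(N_X(p),\bar p)<\tfrac12 d(p,\bar p)$. Because $\bar p$ is an accumulation point, some iterate $p_{k_0}$ lies in $B_{\delta}(\bar p)$; from that index on the line search accepts $\alpha_k=1$, so $p_{k+1}=N_X(p_k)$, the iterates remain in $B_{\delta}(\bar p)$ by invariance, and $d(p_{k+1},\bar p)<\tfrac12 d(p_k,\bar p)$. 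Thus $d(p_k,\bar p)\to 0$ and the full sequence converges to $\bar p$.

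Finally, the rate is inherited from the classical Newton iteration. For all $k\ge k_0$ the recursion is exactly $p_{k+1}=\exp_{p_k}(-\nabla X(p_k)^{-1}X(p_k))$, so Theorem~\ref{superlinearclassicalnewtononmanifold} gives superlinear convergence, and if in addition $\nabla X$ is locally Lipschitz continuous at $\bar p$, Theorem~\ref{th:HV1} upgrades this to quadratic convergence. I expect the main obstacle to be the first stage: establishing $X(\bar p)=0$ cannot use the full-step lemmas (which presuppose a singularity), so it must rest on the genuine line-search argument, and the delicate point there is the case $\alpha_{k_j}\to 0$, where the mean value theorem along geodesics and the limiting behavior of parallel transport must be handled carefully.
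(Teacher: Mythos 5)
Your proposal is correct and follows essentially the same route as the paper: monotone decrease of the merit function $\varphi$, a two-case analysis on $\liminf_{j}\alpha_{k_j}$ to establish $X(\bar p)=0$, then Lemmas~\ref{Le:SupLinConPhi} and~\ref{L:InvNx} to force $\alpha_k=1$ and $p_{k+1}=N_X(p_k)$ for all large $k$, so that Theorems~\ref{superlinearclassicalnewtononmanifold} and~\ref{th:HV1} deliver the superlinear and quadratic rates. The only local deviation is in the case $\liminf_{j}\alpha_{k_j}=0$, where you invoke the mean value theorem along geodesics (arriving at $(1-\sigma)\|X(\bar p)\|^{2}\leq 0$) whereas the paper passes to the limit in the failed Armijo inequality first in $j$ and then in the step exponent $s$ (arriving at $\|X(\bar p)\|^{2}\leq\sigma\|X(\bar p)\|^{2}$); both devices are standard and lead to the same conclusion.
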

\begin{proof}
	We begin to show that $\bar p$  is a singularity of $X$.  As aforementioned, we can assume that  $\left\{p_{k}\right\}$ is infinite,  $\grad\,\varphi(p_{k})\neq0$ and $X(p_k)\neq 0$, for all $k=0,\,1,\,\ldots$. Hence,  \eqref{eq:ArmijoCond123456} and \eqref{eq:NM12} imply that
	\begin{multline}\nonumber
	\varphi(p_{k})-\varphi(p_{k+1})\geq-\sigma\alpha_{k}\langle\grad\,\varphi(p_{k}), v_{k}\rangle\\
	 =\begin{cases}
	\sigma\alpha_{k} \| X(p_k)\|^{2}>0, & \text{if} ~v_{k} ~\text{satisfies} ~\eqref{EQNEWTON12};  \\
	\sigma\alpha_{k} \|\grad\,\varphi(p_{k})\|^{2}>0, & \text{else}.
	\end{cases}
	\end{multline}
	Then  $\left\{\varphi(p_{k})\right\}$ is   strictly decreasing and, bounded below by zero which results in convergence of the sequence. Hence, taking the limite  in this latter inequality we obtain
	$
	\lim_{k\to \infty}  [\alpha_{k}\left\langle \grad\,\varphi(p_{k}), v_{k}\right\rangle]~=~0.
	$
	Let  $\{p_{k_j}\}$ be a subsequence of  $\{p_{k}\}$  such that  $\lim_{k_j\to +\infty}p_{k_j}=\bar p$. Thus, we have
	\begin{equation}\label{ma2}
	\lim_{k_j\to \infty}  \left[\alpha_{k_j}\left\langle \grad\,\varphi(p_{k_j}), v_{k_j}\right\rangle \right]= 0.
	\end{equation}
	Owing that  $\nabla X$ is continuous at $\bar p$ and  $\nabla X(\bar p)$ is nonsingular,  by using  Lemma~\ref{Le:NonSing},  we can also assume that   $\nabla X(p_{k_{j}})$  is nonsingular, for all $j=0, 1, \ldots$. Hence,  \eqref{EQNEWTON12} has a  solution and then for $j=0, 1, \ldots$ we have
	\begin{equation} \label{eq:retwo}
	v_{k_{j}}=-\nabla X(p_{k_{j}})^{-1}X(p_{k_{j}}),\qquad \left\langle \grad\,\varphi(p_{k_j}), v_{k_j}\right\rangle=-\|X(p_{k_{j}})\|^{2}.
	\end{equation}
	To analyse the consequences of  \eqref{ma2}  we consider the following  two possible cases:
	$$
	{\bf a)}~ {\lim\inf}_{j\to \infty} \alpha_{k_{j}}>0,  \qquad   \qquad  \qquad  {\bf b)} ~{\lim\inf}_{j\to \infty} \alpha_{k_{j}}=0.
	$$
	First we assume item a) holds. From \eqref{ma2}, passing onto a further subsequence if necessary, we can assume that 
	\begin{equation}\label{ksks12}
	\lim_{j\to \infty} \left\langle \grad\,\varphi(p_{k_j}), v_{k_j}\right\rangle= 0.
	\end{equation}
	Taking the limit in the latter equality,  as $j$ goes to infinity,  and  considering that $\lim_{j\to +\infty}p_{k_j}=\bar p$ and  $X$ is continuous at $\bar p$, we conclude from  \eqref{ksks12} and \eqref{eq:retwo} that $X(\bar p)=0$ what means $\bar p$  is a singularity of $X$. Now, we assume item b) holds. Hence, given $s\in\mathbb{N}$ we can take $j$ large enough such that $\alpha_{k_{j}}<2^{-s}$. Thus   $2^{-s}$ does not satisfies  the Armijo's condition \eqref{eq:ArmijoCond123456}, i.e., 
	$$
	\varphi\left(\exp_{p_{k_{j}}}(2^{-s} v_{k})\right)> \varphi(p_{k_{j}})+\sigma 2^{-s}\left\langle \grad\,\varphi(p_{k_{j}}), ~ v_{{k_{j}}} \right\rangle.
	$$
	Letting $j$ goes to infinity,   considering that the exponential  mapping is continuous,   $\lim_{j\to +\infty}p_{k_j}=\bar p$ and due to $\nabla X$ and $X$ be   continuous at $\bar p$,    it follows from  \eqref{eq:retwo} and the  last inequality that 
	$$
	\varphi\left(\exp_{\bar p}(2^{-s} {\bar v})\right)> \varphi(\bar p)+\sigma 2^{-s}\left\langle \grad\,\varphi(\bar p), ~ {\bar v} \right\rangle, 
	$$
	where $ {\bar v}=   -\nabla X(\bar p)^{-1}X(\bar p) $. Hence, we obtain $ [\varphi\left(\exp_{\bar p}(2^{-s} {\bar v})\right)- \varphi(\bar p)]/2^{-s}\geq \sigma \left\langle \grad\,\varphi(\bar p), ~ {\bar v} \right\rangle.$ Therefore, letting $s$  goes to infinity  we can conclude that $\left\langle \grad\,\varphi(\bar p), ~ {\bar v} \right\rangle  \geq \sigma \left\langle \grad\,\varphi(\bar p), ~ {\bar v} \right\rangle$, or equivalently
	$ \|X(\bar{p})\|^{2}\leq \sigma\|X(\bar{p}\|^{2}$. Thus,   owing $\sigma\in(0,1/2)$ we have  $\parallel X(\bar p)\parallel=0$ which implies that   $\bar p$  is singularity of $X$.
	
	We proceed to prove that   there exists  $k_{0}$ such that $\alpha_{k}=1$,  for all $k\geq k_{0}$.   Since   $\nabla X(\bar p)$ is nonsingular and  $X(\bar{p})=0$,  the  Lemma~\ref{Le:NonSing} and Lemma~\ref{L:InvNx} imply that there exists   $\hat{\delta}>0$ such that $\nabla X(p)$ is nonsingular  and $N_{X}(p)\in B_{{\delta}}(\bar p)$,  for all $p\in B_{{\delta}}(\bar p)$ and all  $\delta\leq \hat{\delta}$. Since  $\bar p$ is   a cluster  point of    $\left\{p_{k}\right\}$, there exits $k_{0}$ such that $p_{k_{0}} \in B_{\hat{\delta}}(\bar p)$ (shirking $\hat{\delta}$ if necessary). It follows from Lemma \ref{Le:SupLinConPhi} that 
	$
	\varphi\left(N_{X}(p_{k_{0}})\right) \leq \varphi\left(p_{k_{0}})\right)+\sigma\left\langle\grad\,\varphi(p_{k_{0}}), v_{k_{0}}\right\rangle, 
	$
	with $v_{k_{0}}=-\nabla X(p_{k_{0}})^{-1}X(p_{k_{0}})$. Hence, it follows from the last inequality,    \eqref{NewtonIterateMapping} and  \eqref{eq:ArmijoCond123456} that  $\alpha_{k_{0}}=1$. By  \eqref{eq:NM12}  we can conclude that $p_{k_{0}+1}= N_{X}(p_{k_{0}})$ which implies $ p_{k_{0}+1}\in B_{\hat{\delta}}(\bar p)$ because of $N_{X}(p_{k_{0}}) \in B_{\hat{\delta}}(\bar p)$. Then,  an  induction step is completely analogous,   yielding 
	\begin{equation}\label{indicesdentrodabaciarapida123}
	\alpha_{k}=1, \qquad    p_{k+1}= N_{X}(p_{k}) \in B_{\hat{\delta}}(\bar p), \qquad \forall ~k\geq k_{0}.
	\end{equation}
	In order to obtain supetlinear convergence of the entire sequence $\left\{p_{k}\right\}$, let $\bar\delta >0$ be  given by  Theorem~\ref{superlinearclassicalnewtononmanifold}. Thus,  making $\hat{\delta}$ smaller if necessary so that $\hat{\delta}<\bar\delta$,  we can apply Theorem~\ref{superlinearclassicalnewtononmanifold} to conclude from \eqref {indicesdentrodabaciarapida123}  that  $\left\{p_{k}\right\}$ converges  superlinearly to  $\bar p$.
	
To prove that $\left\{p_{k}\right\}$ converges quadratically to $\bar p$,  we first  make $\hat{\delta}<r$, where $r$ is given by  Theorem~\ref{th:HV1}. Since $\nabla X$ is locally Lipschitz continuous at $\bar{p}$,   the result follows from the combination of   \eqref{indicesdentrodabaciarapida123} and Theorem \ref{th:HV1}.
\end{proof}
%%%%%%%%%%%%%%%%%%%%%%%%
\section{Numerical Experiments}\label{sec:NE}
%%%%%%%%%%%%%%%%%%%%%%%%
In this section,  we shall give some numerical experiments  to illustrate the performance of the damped Newton method for minimizing  two families  of functions defined on the cone of symmetric positive definite matrices. A particular instance of one of these families has application in robotics. Before present our numerical experiment, we need to introduce some concepts. Let ${\mathbb P}^{n}$ be the set of  symmetric matrices of order $n\times n$ and ${\mathbb P}^{n}_{++}$ be the cone of  symmetric positive definite matrices.  Following Rothaus \cite{Rothaus1960}, let $\mathbb{M}:=({\mathbb P}^n_{++}, \langle \, , \, \rangle)$ be the Riemannian manifold endowed with the Riemannian metric defined  by  
\begin{equation}\label{eq:metric}
\langle U,V \rangle=\mbox{tr} (V\psi''(P)U)=\mbox{tr} (VP^{-1}UP^{-1}),\, P\in\mathbb{M}, \quad U,V\in
T_P\mathbb{M} \approx~\mathbb{P}^n,
\end{equation}
where $\mbox{tr}P$ denotes the trace of  $P$. Then, the exponential mapping is given by
\begin{equation}\label{Exp:1}
\exp_PV=P^{1/2}e^{\left(P^{-1/2}VP^{-1/2}\right)}P^{1/2}, \qquad P\in\mathbb{M}, \qquad V\in
T_P\mathbb{M} \approx~\mathbb{P}^n.
\end{equation}
Let $X$ and $Y$ be vector fields in $\mathbb{M}$. Then, by using \cite[Theorem 1.2, page 28]{Sakai1996},  we can prove that the  Levi-Civita connection of $\mathbb{M}$  is given by 
\begin{equation} \label{eq:dercov}
\nabla_Y X(P)=X'Y-\frac{1}{2}\left[ YP^{-1}X+ XP^{-1}Y \right], 
\end{equation}
where $P\in \mathbb{M}$, and $ X'$ denotes  the  Euclidean derivative of $X$. Therefore, it follows  from \eqref{eq:metric} and  \eqref{eq:dercov} that the Riemannian gradient and hessian of a  twice-differentiable function $f: \mathbb{M} \to {\mathbb R}$ are respectively  given by:
\begin{multline} \label{eq:GradHess}
\mbox{grad} f(P)=Pf'(P)P,\qquad  \mbox{Hess}f(P)V=Pf''(P)VP+\\
\frac{1}{2}\left[  Vf'(P)P +  PF'(P)V \right],
\end{multline}
for all $V\in T_P\mathbb{M}$,  where  $f'(P)$ and  $f''(P)$ are the  Euclidean gradient and  hessian of $f$ at $P$, respectively. In this case, by using \eqref{Exp:1},  the Newton's iteration  for finding $P\in \mathbb{M}$ such that  $\mbox{grad} f(P)=0$  is given by 
\begin{equation}\label{MNewtonSPD}
P_{k+1}=P_{k}^{1/2}e^{P_{k}^{-1/2}V_{k}P_{k}^{-1/2}}P_{k}^{1/2},  \qquad k=0, 1, \ldots,
\end{equation}
where, by using again the  equalities in  \eqref{eq:GradHess}, $V_k$ is the unique solution of the Newton's linear equation
\begin{equation}\label{Newton_no_ConeSPD}
P_{k}f''(P_{k})V_{k}P_{k}+\frac{1}{2}\left[  V_{k}f'(P_{k})P_{k} +  P_{k}f'(P_{k})V_{k} \right]=-P_{k}f'(P_{k})P_{k}, 
\end{equation}
which corresponds to \eqref{EQNEWTON12} for $X= \mbox{grad} f$. Now, we are going to present  concrete  examples for \eqref{MNewtonSPD} and  \eqref{Newton_no_ConeSPD}.  Let  $i=1, 2$ and $f_{i}:\mathbb{P}^{n}_{++}\to\mathbb{R}$ be  defined, respectively,  by 
\begin{equation}\label{eq:familyfuction}
f_1(P)=\mbox{a}_{1}\ln\,\det P + \mbox{b}_{1}\,\mbox{tr}\,P^{-1}, \quad \quad f_2(P)=\mbox{a}_{2}\ln\det P - \mbox{b}_{2}\mbox{tr}\,P,
\end{equation}
where $\mbox{b}_{1}/\mbox{a}_{1}>0$, $\mbox{a}_{2}/\mbox{b}_{2}>0$, and $\det P$  denotes the determinant  of  $P$, respectively.  In particular, the  function $f_2$ can be associated to the optimal dextrous hand grasping problem in robotics, see  \cite{robotic2002,robotic2007,robotic2002_2}.

Then, for each  $i=1, 2$,   the Euclidean gradient and hessian of  $f_{i}$ are given, respectively, by
\begin{multline}\label{eq:qwjfvqek1}
f_1'(P)=\mbox{a}_{1}P^{-1}-\mbox{b}_{1}P^{-2},\\
f_1''(P)V=\mbox{b}_{1}\left(P^{-1}VP^{-2}+P^{-2}VP^{-1}\right)-
\mbox{a}_{1}P^{-1}VP^{-1}, 
\end{multline}
\begin{multline}\label{eq:qwjfvqek2}
f_2'(P)=\mbox{a}_{2}P^{-1}-\mbox{b}_{2}I, \qquad \qquad  \quad  f_2''(P)V_{k}=-\mbox{a}_{2}P^{-1}VP^{-1}, 
\end{multline}
where $I$ denotes the  $n  \times n $ identity matrix.  It follows from \eqref{eq:GradHess}, \eqref{eq:familyfuction}, \eqref{eq:qwjfvqek1} and \eqref{eq:qwjfvqek2} that
\begin{equation}\label{eq:gradfamily}
\mbox{grad}\,f_1(P)=\mbox{a}_{1}P-\mbox{b}_{1}I, \quad \qquad \mbox{grad}\,f_2(P)=\mbox{a}_{2}P-\mbox{b}_{2}P^{2}.
\end{equation}
From the last two equalities, we can conclude that the global minimizer of $f_i$,  for each  $i=1, 2$ are $P_1^{*}=\mbox{b}_{1}/\mbox{a}_{1}\,I$ and $P_2^{*}=\mbox{a}_{2}/\mbox{b}_{2}\,I$, respectively.  Our task is to execute explicitly the Damped Newton method to find the global minimizer of $f_i$,  for each  $i=1, 2$. For this purpose, consider $\varphi_{i}:\mathbb{P}_{++}^{n}\to\mathbb{R}$ given by $\varphi_{i}(P)=1/2\left\|\mbox{grad}\,f_{i}(P)\right\|^{2}$, $i=1, 2$. Thus, by using \eqref{eq:gradfamily}  we conclude that
\begin{equation}\label{eq:gradmeritfunctionfamily}
\grad\,\varphi_{1}(P)=\mbox{ab}I -\mbox{b}_{1}^{2}P^{-1}, \qquad  \qquad \grad\,\varphi_{2}(P)=d^{2}P^{3}-\mbox{a}_{2}\mbox{b}_{2}P^{2}.
\end{equation}
By combining  \eqref{eq:metric}, \eqref{Exp:1} and \eqref{eq:gradfamily},  after some calculations  we  obtain  the following equalities 
\begin{align}
\varphi_{1}\left(\exp_{P} V\right)&=\dfrac{1}{2} \mbox{tr}\left(\mbox{a}_{1}I-\mbox{b}_{1}\left(P^{1/2}e^{P^{-1/2}VP^{-1/2}}P^{1/2}\right)^{-1}\right)^{2}, \qquad\label{eq:ancqcwjhqwh1}\\
\varphi_{2}\left(\exp_{P}V\right)&=\dfrac{1}{2} \mbox{tr}\left(\mbox{a}_{2}I-\mbox{b}_{2}P^{1/2}e^{P^{-1/2}VP^{-1/2}}P^{1/2}\right)^{2}. \qquad\label{eq:ancqcwjhqwh2}
\end{align}
Finally,  by using  \eqref{MNewtonSPD}, \eqref{Newton_no_ConeSPD}, definition of $f_{1}$ in \eqref{eq:familyfuction}, \eqref{eq:qwjfvqek1}, left hand  sides of \eqref{eq:gradfamily} and \eqref{eq:gradmeritfunctionfamily} and \eqref{eq:ancqcwjhqwh1} we give  the damped Newton method, for finding the global minimizer of $f_1$.  The formal algorithm to find  the global minimizer of function $f_2$ will not be presented here because it can be obtained similarly  by using  \eqref{MNewtonSPD}, \eqref{Newton_no_ConeSPD}, the definition of $f_{2}$ in \eqref{eq:familyfuction}, \eqref{eq:qwjfvqek2}, right hand sides of \eqref{eq:gradfamily} and \eqref{eq:gradmeritfunctionfamily}  and \eqref{eq:ancqcwjhqwh2}. The Damped Newton algorithim for the matrix cone is given as follows.  \\

\vspace{0.4cm}
\hrule
\noindent
\\
\noindent
\centerline{\bf Damped Newton Method in $\mathbb{P}^{n}_{++}$  (DNM-$\mathbb{P}^{n}_{++}$)}
\\
\hrule
\begin{description}
	\item[Step 0.] Choose a scalar $\sigma\in(0,1/2)$, take an initial point $P_0 \in \mathbb{M}$, and set $k=0$;
	\item[Step 1.] Compute {\it search direction} $V_{k}$, as a solution of the  linear equation
	\begin{align} \label{eq:eqNewtonRopensimetric}
	P_{k}V_{k}+V_{k}P_{k}&=2(P_{k}^2-a/bP_{k}^3).
	\end{align}
	If $V_{k}$ exists go to {\bf Step $2$}. Othewise, put
	 $V_{k}=-\grad\,\varphi_{1}(p_{k})$, i.e, 
	\begin{align} \label{DirectionOfGradientecone}
	V_{k}=\mbox{b}_{1}^{2}P_{k}^{-1}-\mbox{ab}I.
	\end{align}
	\hspace{.5cm} If $V_{k}=0$, stop;
	\item[Step 2.] Compute the {\it stepsize} by the  rule
	\begin{multline}\label{eq:ArmijoCond1234561}
	\alpha_k:=\max\bigg{\{}2^{-j}:\dfrac{1}{2}\emph{tr}\left(\mbox{a}_{1}I-\mbox{b}_{1}\left(P_{k}^{1/2}e^{2^{-j}P_{k}^{-1/2}VP_{k}^{-1/2}}P_{k}^{1/2}\right)^{-1}\right)^{2}\leq\\
	\left(\dfrac{1}{2}-\sigma2^{-j}\right) \emph{tr}\left(\mbox{a}_{1}I-\mbox{b}_{1}P_{k}^{-1}\right)^{2}\bigg{\}}
	\end{multline}
	and set the {\it next iterated}  as 
	\begin{equation} \label{eq:NM121}
	P_{k+1}:=P_{k}^{1/2}e^{\alpha_{k}P_{k}^{-1/2}V_{k}P_{k}^{-1/2}}P_{k}^{1/2};
	\end{equation}
	\item[Step 3.] Set $k\leftarrow k+1$ and go to {\bf Step 1}.
\end{description}
\hrule
\noindent
\\

Although the domain of $f_1$ is a subset of the symmetric matrix set, namely, $\mathbb{P}^{n}_{++}$, equality \eqref{eq:NM12} shows that   DNM-$\mathbb{P}^{n}_{++}$ generates only feasible points without using projections or any other procedure to remain the feasibility.  Hence, problem of minimizing $f_1$ onto   $\mathbb{P}^{n}_{++}$ can be seen as unconstrained Riemannian optimization problem.  Note that, in this case,   the equation \eqref{eq:eqNewtonRopensimetric}  always has  unique solution $V_{k}\in T_{P_{k}}\mathbb{P}^{n}_{++}$, see \cite[Th. 8.2.1]{datta2004}. Consequently the direction $V_{k}$ in \eqref{DirectionOfGradientecone} does not play any role here.  Thus, in this case, we can compare the Newton and damped Newton methods in number of iterations, function evaluation and CPU time to reach the solution. The  Newton method is formally described as follow.\\

\vspace{0.3cm}
\hrule
\noindent
\\
\centerline{\bf   Newton Method  in $\mathbb{P}^{n}_{++}$ (NM-$\mathbb{P}^{n}_{++}$)}
\\
\hrule
\begin{description}
	\item[Step 0.]  Take an initial point $P_0 \in \mathbb{M}$, and set $k=0$;
	\item[Step 1.] Compute {\it search direction} $V_{k}$ as a solution of the linear equation
	$$
	P_{k}V_{k}+V_{k}P_{k}=2(P_{k}^2-\mbox{a}_{1}/\mbox{b}_{1}P_{k}^3).
	$$
	\item[Step 2.]  Compute the {\it next iterated} by 
	$$
	P_{k+1}:=P_{k}^{1/2}e^{P_{k}^{-1/2}V_{k}P_{k}^{-1/2}}P_{k}^{1/2};
	$$
	\item[Step 3.] Set $k\leftarrow k+1$ and go to {\bf Step 1}.
\end{description}
\hrule
\noindent
\vspace{0.005cm}

We have implemented the above two algorithms  by using MATLAB R2015b.  The 
experiments are performed on an Intel Core 2 Duo Processor 2.26 GHz , 4 GB of RAM, and OSX operating system. We compare the iterative behavior of  NM-$\mathbb{P}^{n}_{++}$ and DNM-$\mathbb{P}^{n}_{++}$ applied to $f_1$ and $f_2$ with randomly chosen initial guesses. In all experiments the stop criterion at the iterate $P_{k}\in\mathbb{P}^{n}_{++}$  is  $\left\|\mbox{grad}\,f\left(P_{k}\right)\right\|\leq10^{-8}$ where $\left\|\cdot\right\|$ is norm associated  to the metric given by \eqref{eq:metric}.  All codes are freely available at \url{http://www2.uesb.br/professor/mbortoloti/wp-content/uploads/2018/03/MATLAB_codes.zip}.
\begin{figure}[ht]
	\centering
	\subfigure[ $f_{1}$ with $\mbox{b}_{1}/\mbox{a}_{1}=0.1$ and $n=1000$.]{
		\includegraphics[scale=0.6]{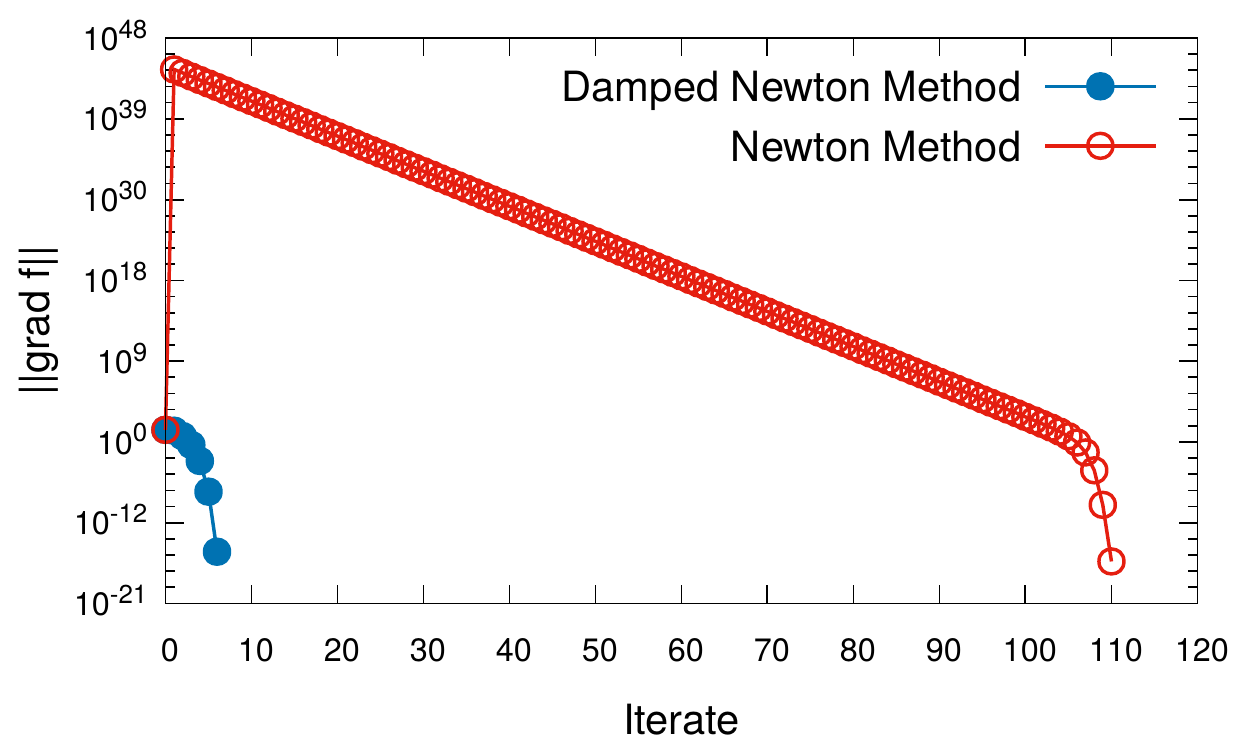}
		\label{fig:pfa}
	}
	\subfigure[$f_{2}$ with $\mbox{b}_{2}/\mbox{a}_{2}=0.002$ and $n=1000$.]{
		\includegraphics[scale=0.6]{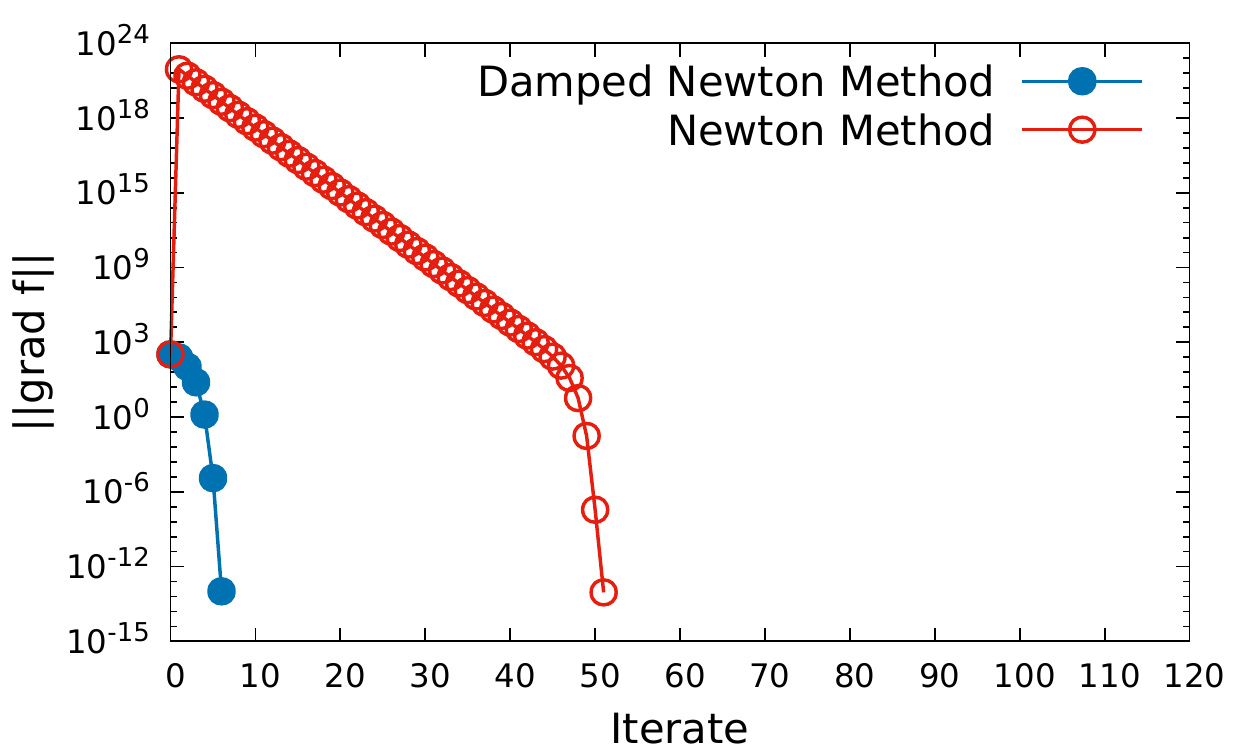}
		\label{fig:pfb}
	}
	\label{figpf}
\end{figure}
We run the above two algorithms in a huge number of  problems by varying the dimensions and parameters $\mbox{b}_{1}/\mbox{a}_{1}$ and $\mbox{b}_{2}/\mbox{a}_{2}$ in functions $f_{1}$ and  $f_{2}$, respectively. In general, DNM-$\mathbb{P}^{n}_{++}$ is  superior to NM-$\mathbb{P}^{n}_{++}$ in number of iterations and CPU time.   As  we can see in  figures (a) and (b)    NM-$\mathbb{P}^{n}_{++}$  performs huge number of  iterations before reaching the superlinear convergence region while the line-search of DNM-$\mathbb{P}^{n}_{++}$ decreasing gradient norm ensuring a superior performance in number of iterations. Note that in these figures  DNM-$\mathbb{P}^{n}_{++}$ and NM-$\mathbb{P}^{n}_{++}$ have the same behavior  when the iterations close to the solution,  which is the consequence of Lemma~\ref{Le:SupLinConPhi}. 
In part the efficiency of the DNM-$\mathbb{P}^{n}_{++}$ can be explained due to the  linearsearch decreasing the merit function.

Table $1$  shows that DNM-$\mathbb{P}^{n}_{++}$ is superior to  NM-$\mathbb{P}^{n}_{++}$ in number of iterations and CPU time, besides showing the number of evaluation of the function in the  linear-search to compute the stepsize. In the columns of this table we read: $n$ is the dimension of $\mathbb{P}^{n}$, $\mbox{b}_{1}/\mbox{a}_{1}$ and $\mbox{b}_{2}/\mbox{a}_{2}$ are the parameters  defining the functions $f_1$ and $f_2$, respectively, NIT is the number of iterates to reach the minimaizer of the function, HE is the number of Hessian evaluation, GE is the number of Gradient evaluation and T is the CPU time in seconds. It is important to observe GE in DNM-$\mathbb{P}^{n}_{++}$ takes into account the evaluations in the Armijo's rule also. We know that, in general,  the linear-search to compute the stepsize is expensive, but the computational effort did not grow significantly with the dimension of of $\mathbb{P}^{n}$ in the DNM-$\mathbb{P}^{n}_{++}$. In each method we can observe that each iteration evaluates the Hessian once. Hence, DNM-$\mathbb{P}^{n}_{++}$ has a superior performance than NM-$\mathbb{P}^{n}_{++}$ because the minimizer is achieved by DNM-$\mathbb{P}^{n}_{++}$ with less iterates than NM-$\mathbb{P}^{n}_{++}$, as can be seen in Table 1.
\begin{table}\label{tab:f1}
	\centering
	{\small
		\begin{tabular}{p{0.3cm}p{0.5cm}p{1.5cm}p{0.4cm}p{0.4cm}p{0.4cm}p{2.0cm}p{0.4cm}p{0.4cm}p{0.4cm}p{0.8cm}}
			\toprule
			& & & \multicolumn{4}{c}{\hspace{-1.0cm}{\bf NM-$\mathbb{P}^{n}_{++}$}} & \multicolumn{4}{c}{\hspace{-0.6cm}{\bf DNM-$\mathbb{P}^{n}_{++}$}}\\
			&$b_{i}/a_{i}$ & $n$ & NIT & HE & GE & T & NIT & HE & GE &  T\\
			\midrule
			\multirow{9}{10em}{$f_{1}$}&\multirow{3}{4em}{$0.1$}  & $1$   &   $41$ & $41$ & $41$ & $0.19$ & $4$& $4$ & $21$  & $0.10$ \\
&& $100$    &   $109$&  $109$  & $109$   & $1.90$ & $6$ & $6$  & $30$ &  $0.59$\\
&& $1000$  &   $110$& $110$  & $110$ & $1240$ & $6$&  6 & 30  &  $107$\\\cline{2-11}
			
&\multirow{3}{4em}{$1.0$}  & $1$   & $7$ & $7$ & $7$ & $0.10$ & $4$& $4$  & 17 &$0.11$ \\
&& $100$    &   $13$  & $13$ &  $13$  & $0.44$ & $4$ & $4$ & 18 &  $0.43$\\
&& $1000$    &   $13$ &  $13$ &  $13$  &  $147$ & $4$ & $4$  & 18 &  $64.90$\\\cline{2-11}
			
&\multirow{3}{4em}{$1.5$}  & $1$   & $6$ & $6$  & $6$  & $0.10$ & $4$ & $4$  & 17 & $0.12$ \\
&& $100$&$10$& $10$ & $10$ &$0.22$&$5$& $5$ &22&  $0.25$\\
&& $1000$    &   $10$ & $10$  & $10$ & $113$ & $6$ & $6$  & 27 &   $97.10$\\\midrule

\multirow{9}{10em}{$f_{2}$}&\multirow{3}{4em}{$0.001$} &$1$&$249$&$249$&$249$&$0.37$&$6$&$6$ &31&$0.17$ \\
&& $100$&$100$&$100$&$100$&$1.66$&$7$&$7$&34&$0.42$\\
&&$1000$&$100$& $100$  & $100$  &$1130.00$&$7$&$7$&34&$131.00$\\
\cline{2-11}
&\multirow{3}{4em}{$0.002$} &$1$&$125$&$125$& $125$&$0.29$&$6$& $6$ &30&$0.10$\\
&&$100$&$51$&$51$&$51$ &$0.90$&$6$&$6$&29&$0.27$\\
&& $1000$&$51$&$51$&$51$& $627$&$6$&$6$&29&$109$\\
\cline{2-11}
&\multirow{3}{4em}{$0.01$} & $1$   &   $26$ & $26$ & $26$ &  $0.31$ & $5$ & $5$ & 23  &   $0.16$\\
&& $100$&$12$& $12$ &$12$  & $0.92$&$5$&$5$  &22&  $0.21$\\
&& $1000$&$12$& $12$ & $12$ & $138$&$5$&$5$&22&$85.00$\\
\toprule
\end{tabular}
\caption{\small Performance of DNM-$\mathbb{P}^{n}_{++}$ and  NM-$\mathbb{P}^{n}_{++}$ to $f_{1}$ and $f_{2}$ functions.}}
\end{table}

%%%%%%%%%%%%%%%%
\section{Final Remarks}\label{sec:conclusions}
%%%%%%%%%%%%%%%%
In this paper, in order to find a singularity of a vector field defined on Riemannian manifold, we presented a damped Newton's method  and established its  global convergence with quadratic/superlinear convergence rate.  Note that the global convergence analysis  of the damped Newton's method without assuming nonsingularity  of the hessian of the objective function at its critical points was established in the Euclidean context; see \cite[Chapter~1, Section~1.3.3]{Bertsekas2014}. Since many important problems in the context of a Riemannian manifold become the problem of finding a singularity of a vector field \cite{Adler2002,EdelmanAriasSmith1999}, it would be interesting to obtain a similar global analysis for a damped Newton's method to find a singularity of a vector field defined on a Riemannian manifold.  Based on our previous experience, we feel that more tools need to be developed to achieve this goal. In order to obtain a better numerical efficiency to the  damped Newton's method our next task is to design new methods where the equation  \eqref{EQNEWTON12} is approximately solved, besides using  retraction in \eqref{eq:NM12} and more efficient linear seach in  \eqref{eq:ArmijoCond123456}.

%\section{References}
%\bibliographystyle{apalike}
%\bibliography{biblioGlobal}

\end{document}